\newcommand{\R}{{\mathbb R}}
\DeclareMathOperator{\argmin}{argmin}
\DeclareMathOperator{\prox}{prox}
\newcommand{\cH}{{\mathcal H}}
\newcommand{\demi}{\frac{1}{2}}
\newcommand{\ie}{{\it i.e.}\,\,}
\newlength{\textlarg} 
\newcommand{\interior}{{\rm int}\kern 0.06em}
\newcommand{\inte}{{\rm int}\kern 0.06em}
\newcommand{\cl}{{\rm cl}\kern 0.06em}
\newcommand{\zer}{{\rm zer}\kern 0.06em}
\newcommand{\gph}{{\rm gph}\kern 0.06em}
\newcommand{\dom}{{\rm dom}\kern 0.06em}
\newcommand{\pr}{{\rm pr}\kern 0.06em}
\newcommand{\e}{\varepsilon}
\def\d{\delta}
\def\<{\langle}
\def\>{\rangle}
\newcommand{\tcb}{\textcolor{blue}}
\newcommand{\tcr}{\textcolor{red}}
\renewcommand*{\backrefalt}[4]{%
\ifcase #1 %
(Not cited)%
\or
(Cited on p.~#2)%
\else
(Cited on pp.~#2)%
\fi
}
\begin{document}

\title{Damped inertial dynamics with vanishing Tikhonov regularization: strong asymptotic convergence towards the minimum norm solution}

\titlerunning{Inertial gradient dynamics with Tikhonov regularization}

\author{Hedy ATTOUCH \and A\"icha BALHAG   \and Zaki CHBANI   \and Hassan RIAHI}

\institute{
Hedy ATTOUCH  \at IMAG, Univ. Montpellier, CNRS, Montpellier, France\\
hedy.attouch@umontpellier.fr,\\  Supported by COST Action: CA16228
\and
A\"icha BALHAG   \and Zaki CHBANI   \and Hassan RIAHI\\
 Cadi Ayyad University \\ S\'emlalia Faculty of Sciences 
 40000 Marrakech, Morroco\\
 aichabalhag@gmail.com \and   chbaniz@uca.ac.ma  \and h-riahi@uca.ac.ma 
}
\maketitle


\begin{abstract}
  In a Hilbert space, we provide a fast dynamic approach to the hierarchical minimization problem which consists in finding the minimum norm solution of a convex minimization problem.
For this,  we study the convergence properties of the trajectories generated by a damped inertial dynamic with Tikhonov regularization. When the time goes to infinity, the Tikhonov regularization parameter is supposed to tend towards zero, not too fast, which is a key property to make the  trajectories  strongly converge  towards the minimizer of $f$ of minimum norm.
According to the structure of the heavy ball method for strongly convex functions, the viscous  damping coefficient  is proportional to the square root of the Tikhonov regularization parameter. Therefore, it also converges to zero, which will ensure rapid convergence of  values.
Precisely, under a proper tuning of these parameters, based on Lyapunov's analysis, we show that the trajectories  strongly converge  towards the minimizer  of minimum norm, and we provide the convergence rate of the values. We show a trade off between the property of fast convergence of values, and the property of strong convergence towards the minimum norm solution.
This study improves several previous works where this type of results was obtained under restrictive hypotheses.  
\end{abstract}

\medskip

\keywords{Accelerated gradient methods; convex optimization; damped inertial dynamics;  hierarchical minimization;    Nesterov accelerated gradient method; Tikhonov approximation.}

\medskip

\subclass{37N40, 46N10, 49M30, 65K05, 65K10, 90B50, 90C25.}


\vspace{5mm}

\section{Introduction}
Throughout the paper, $\mathcal H$ is a real Hilbert space which is endowed with the scalar product $\langle \cdot,\cdot\rangle$, with $\|x\|^2= \langle x,x\rangle    $ for  $x\in \mathcal H$.
 We consider
 the convex minimization problem
\begin{equation}\label{edo0001}
 \min \left\lbrace  f (x) : \ x \in \mathcal H \right\rbrace,
\end{equation}
where $f : \mathcal H \rightarrow \mathbb R$ is a convex continuously differentiable function whose solution set $S=\argmin f$ is  nonempty.
We aim at finding  by rapid methods the element of minimum norm of $S$.
Our approach is in line with the dynamic approach
 developed  by Attouch and L\'aszl\'o in \cite{AL} to solve this question. It is based on the asymptotic analysis, as $t \to +\infty$, of the nonautonomous damped inertial dynamic
 \begin{equation*}
{\rm(TRIGS)} \qquad \ddot{x}(t) + \d\sqrt{\e(t)}  \dot{x}(t) + \nabla f (x(t)) + \varepsilon (t) x(t) =0,
\end{equation*}
where the function  $f$ and the Tikhonov regularization parameter $\varepsilon$ satisfy the following hypothesis\footnote{In section \ref{non-smooth}, we will extend our study to the case of a convex lower semicontinuous proper function $f: \cH \to \R \cup \left\lbrace +\infty \right\rbrace$.}:
\begin{align*}
( \mathcal{H}_0) \;\begin{cases}
 \; \; f : \mathcal H \rightarrow \mathbb R \mbox{ is convex and differentiable},  \nabla f \mbox{ is Lipschitz continuous on  bounded sets}; \vspace{1mm} \\
 \; \; S := \mbox{argmin} f \neq \emptyset. \mbox{ We denote by } x^*  \mbox{ the element of minimum norm of } S;   \vspace{1mm}\\
\; \;  \varepsilon : [t_0 , +\infty [ \to \mathbb R^+  \mbox{ is   a nonincreasing function, of class } \mathcal C^1, \mbox{ such that }\  \lim_{t \to \infty} \varepsilon (t) =0.
\end{cases}
\end{align*}
The Cauchy problem for (TRIGS) is well posed.
The proof of the existence and uniqueness of a global solution for the corresponding Cauchy problem is given in the appendix (see also \cite{AL}). It  is based on classical arguments  combining the Cauchy-Lipschitz theorem with energy estimates. 
Our main contribution is to develop a new Lyapunov analysis which gives the strong convergence of the trajectories of (TRIGS) to the element of minimum norm of $S$. Precisely, we give sufficient conditions on $\varepsilon (t)$ which ensure that $\lim_{t\to +\infty}\| x(t)-x^\ast\| =0$. This improves the results of \cite{AL}.

\subsection{Attouch-L\'aszl\'o Lyapunov analysis of (TRIGS)}

The main idea  developed in \cite{AL} consists of starting from the Polyak heavy ball with friction dynamic for strongly convex functions,  then adapting it via Tikhonov approximation to deal with the case of general convex functions.
 Recall that
a function  $f: \cH \to \mathbb R$ is said to be $\mu$-strongly convex for some $\mu >0$ if   $f- \frac{\mu}{2}\| \cdot\|^2$ is convex.
In this setting, we have the following exponential convergence result for the damped autonomous inertial dynamic where the damping coefficient is twice the square root of the modulus of strong convexity of $f$:
\begin{theorem}\label{strong-conv-thm}
Suppose that $f: \cH \to \mathbb R$ is a function of class ${\mathcal C}^1$ which is $\mu$-strongly convex for some $\mu >0$.
Let  $x(\cdot): [t_0, + \infty[ \to \cH$ be a solution trajectory of
\begin{equation}\label{dyn-sc-a}
\ddot{x}(t) + 2\sqrt{\mu} \dot{x}(t)  + \nabla f (x(t)) = 0.
\end{equation}
 Then, the following property holds:  
$$
f(x(t))-  \min_{\mathcal H}f  = \mathcal O \left( e^{-\sqrt{\mu}t}\right) \;  \mbox{ as } \; t \to +\infty.
$$
\end{theorem}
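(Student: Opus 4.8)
\medskip
\noindent\textbf{Proof strategy.}
The plan is to run a Lyapunov analysis tailored to the strongly convex setting. Since $f$ is $\mu$-strongly convex, it admits a unique minimizer, which is necessarily the minimum norm solution $x^\ast$; set $f^\ast \eqdef \min_{\mathcal H} f = f(x^\ast)$ and note $\nabla f(x^\ast) = 0$. I would introduce the energy function
$$
\mathcal E(t) \eqdef f(x(t)) - f^\ast + \demi \norm{\dot x(t) + \sqrt{\mu}\,(x(t) - x^\ast)}^2 ,
$$
which is nonnegative and dominates $f(x(t)) - f^\ast$. The anchor $\dot x + \sqrt\mu\,(x - x^\ast)$ is the combination suggested by the linear change of variables that diagonalizes the heavy-ball equation for a quadratic, and it is precisely what lets the damping coefficient $2\sqrt\mu$ come into play.

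The core computation is to differentiate $\mathcal E$ along the trajectory, using the dynamic rewritten as $\ddot x + \sqrt\mu\,\dot x = -\sqrt\mu\,\dot x - \nabla f(x)$. After expansion, some of the terms in $\dotp{x - x^\ast}{\dot x}$ cancel and one is left with
$$
\dot{\mathcal E}(t) = -\sqrt\mu\,\norm{\dot x(t)}^2 - \mu\,\dotp{x(t) - x^\ast}{\dot x(t)} - \sqrt\mu\,\dotp{x(t) - x^\ast}{\nabla f(x(t))} .
$$
I would then establish the differential inequality $\dot{\mathcal E}(t) + \sqrt\mu\,\mathcal E(t) \le 0$. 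Expanding $\sqrt\mu\,\mathcal E(t)$, the terms $\pm\,\mu\,\dotp{x - x^\ast}{\dot x}$ cancel and the $\norm{\dot x}^2$ contributions combine to $-\tfrac{\sqrt\mu}{2}\norm{\dot x}^2$; the residual
$$
- \sqrt\mu\,\dotp{x - x^\ast}{\nabla f(x)} + \sqrt\mu\,\pa{f(x) - f^\ast} + \tfrac{\mu^{3/2}}{2}\norm{x - x^\ast}^2
$$
is handled by the strong convexity estimate $\dotp{\nabla f(x)}{x - x^\ast} \ge f(x) - f^\ast + \tfrac{\mu}{2}\norm{x - x^\ast}^2$ (obtained from the subgradient inequality for $f - \tfrac{\mu}{2}\norm{\cdot}^2$ evaluated between $x$ and $x^\ast$, together with $\nabla f(x^\ast)=0$). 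This estimate is exactly tight enough to absorb both the $f(x) - f^\ast$ term and the $\tfrac{\mu^{3/2}}{2}\norm{x - x^\ast}^2$ term, so that $\dot{\mathcal E}(t) + \sqrt\mu\,\mathcal E(t) \le -\tfrac{\sqrt\mu}{2}\norm{\dot x(t)}^2 \le 0$.

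It then remains to integrate: from $\tfrac{d}{dt}\pa{e^{\sqrt\mu\,t}\mathcal E(t)} = e^{\sqrt\mu\,t}\pa{\dot{\mathcal E}(t) + \sqrt\mu\,\mathcal E(t)} \le 0$ one gets $\mathcal E(t) \le \mathcal E(t_0)\,e^{-\sqrt\mu\,(t - t_0)}$ for all $t \ge t_0$, and since $0 \le f(x(t)) - f^\ast \le \mathcal E(t)$, the claimed rate $f(x(t)) - \min_{\mathcal H} f = \mathcal O(e^{-\sqrt\mu\,t})$ follows. I expect the only genuine obstacle to be the selection of the Lyapunov function — pinning down both the coefficient $\sqrt\mu$ inside the squared norm and the exponential rate $\sqrt\mu$ in the differential inequality so that strong convexity closes the computation with nothing to spare; once the right function is written down, the rest is bookkeeping. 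Technically, $t \mapsto \mathcal E(t)$ is of class $\mathcal C^1$ because $x(\cdot)$ is a $\mathcal C^2$ solution and $\nabla f$ is continuous, which legitimates the differentiation and the Gr\"onwall step.
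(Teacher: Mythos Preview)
Your proof is correct. The paper does not actually supply a proof of Theorem~\ref{strong-conv-thm}; it is recalled as a known result (the Polyak heavy ball estimate in the strongly convex setting) and serves only as motivation for the design of {\rm(TRIGS)}. So there is no ``paper's own proof'' to compare against for this particular statement.

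That said, your Lyapunov function is exactly the autonomous specialization of the one the paper builds its main analysis around: in \eqref{3}--\eqref{3b} the paper takes $E(t)=\varphi_t(x(t))-\varphi_t(x_{\varepsilon(t)})+\tfrac12\|\tau(t)(x(t)-x_{\varepsilon(t)})+\dot x(t)\|^2$ with $\tau(t)=\lambda\sqrt{\varepsilon(t)}$, and establishes a differential inequality $\dot E+\mu(t)E\le\cdots$ with $\mu(t)=-\dot\varepsilon/(2\varepsilon)+(\delta-\lambda)\sqrt{\varepsilon}$. Freezing $\varepsilon(t)\equiv\mu$, $\delta=2$, $\lambda=1$ recovers precisely your $\mathcal E$, your $\tau=\sqrt\mu$, and your rate $\mu(t)\equiv\sqrt\mu$. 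So your argument is not only correct but perfectly in line with the paper's philosophy; it is, in effect, the constant-$\varepsilon$ warm-up that the general Theorem~\ref{strong-conv-thm-b} extends. The one cosmetic remark: calling the unique minimizer ``the minimum norm solution'' is redundant here, since strong convexity already forces uniqueness.
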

\noindent To adapt this result to the case of a general convex differentiable function $f: \cH \to \mathbb R$, a natural idea  is to use Tikhonov's method of regularization. This leads to consider the  non-autonomous dynamic which at time $t$ is governed by the gradient of the strongly convex function 
 $$\varphi_t: \cH \to \mathbb R, \quad
\varphi_t (x) := f(x) + \frac{\varepsilon(t)}{2} \|x\|^2.
 $$
  Then, replacing $f$ by $\varphi_t$ in
\eqref{dyn-sc-a}, and noticing that $\varphi_t$ is $\varepsilon (t)$-strongly convex, this gives the following dynamic which was introduced in \cite{AL} ($\delta$ is a positive parameter)
\begin{equation*}
{\rm(TRIGS)} \qquad \ddot{x}(t) + \d\sqrt{\e(t)}  \dot{x}(t) + \nabla f (x(t)) + \varepsilon (t) x(t) =0.
\end{equation*}
 (TRIGS) stands shortly for Tikhonov regularization of inertial gradient systems.
In order not to asymptotically modify the equilibria, it is supposed that $\varepsilon (t) \to 0$ as $t\to +\infty$\footnote{This is the key property of the asymptotic version ($t\to +\infty$) of the Browder-Tikhonov regularization method.}. This condition implies that (TRIGS) falls within the framework of the inertial gradient systems with asymptotically vanishing damping.
The importance of this  class of inertial dynamics  has been highlighted by several recent studies
\cite{AAD1}, \cite{ABotCest}, \cite{AC10}, \cite{ACPR}, \cite{AP}, \cite{CD}, \cite{SBC}, which make the link with the accelerated gradient method of Nesterov  \cite{Nest1,Nest2}.\\
The control of the decay of $\varepsilon(t)$ to zero as $t \to +\infty$  plays a key role in the  Lyapunov analysis of (TRIGS), and uses  the following condition.
\begin{definition}
Let us give $\delta >0$. We say that $t \mapsto \varepsilon(t)$ satisfies the controlled decay property ${\rm(CD)}_{\lambda}$, if it is a nonincreasing function which satisfies: there exists $t_1\ge t_0$ such that for all $t\ge t_1,$
\begin{equation}\label{AL-growth}
\dfrac{d}{dt} \left(\frac{1}{\sqrt{\e(t)}}\right) \le \min(2\lambda-\d, \d-\lambda),
\end{equation} 
where $\lambda$ is a  parameter such that $ \frac{\d}{2} < \lambda  < \delta$ for $0<\delta \leq 2$,  and $ \frac{\d+ \sqrt{\delta^2 -4}}{2} < \lambda  < \delta$ for $\delta > 2$ .
\end{definition}

By integrating the differential inequality \eqref{AL-growth}, one can easily verify that
this condition implies that
   $ \e (t) $ is greater than or equal to $ C / t^2 $.  Since the damping coefficient is proportional to $\sqrt{\e(t)}$, this means that it must be greater than or equal to $C/t$.
 This is in accordance with the theory of inertial gradient systems with time-dependent viscosity coefficient, which states that the asymptotic optimization property is valid provided that the integral on $[t_0, +\infty[$ of the viscous damping coefficient  is infinite,  see \cite{AC10}, \cite{CEG}.
 Let us state the following convergence result obtained in \cite{AL}.

\begin{theorem}\label{AL1}{\rm (Attouch-L\'aszl\'o \cite{AL})}
Let $x : [t_0, +\infty[ \to \mathcal{H}$ be a solution trajectory of  {\rm(TRIGS)}. Let $\delta$ be a positive parameter.
Suppose that $\varepsilon(\cdot)$ satisfies the condition ${\rm(CD)}_{\lambda}$ for some $\lambda>0$.
Then,  we have the following rate of convergence of values: for all $t\ge t_1$
\begin{equation}\label{basic-Lyap}
f(x(t)) - \min_{\cH} f \le \frac{\lambda\|x^*\|^2}{2}\frac{1}{\gamma(t)}\int_{t_1}^t \e^{\frac32}(s)\gamma(s)ds+\frac{C}{\gamma(t)},
\end{equation}
where
\begin{equation*}
\gamma(t)=\exp \left({\displaystyle{\int_{t_1}^t\mu(s)ds}}\right), \quad
\mu(t) =-\frac{\dot{\varepsilon}(t)}{2\varepsilon(t)}+ (\delta-\lambda)\sqrt{\varepsilon(t)}
\end{equation*}
\begin{flushleft}
$ \mbox{ and  }\; C=\left( f(x(t_1)) - f^\ast \right) +\frac{\e(t_1)}{2}\|x(t_1)\|^2+ \frac{1}{2} \| \lambda\sqrt{\e(t_1)}(x(t_1)-x^\ast) + \dot{x}(t_1) \|^2.$
\end{flushleft}
\end{theorem}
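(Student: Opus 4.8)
\medskip
\noindent\textbf{Proof proposal.}
The plan is a Lyapunov analysis built on the interpretation of (TRIGS) as the heavy-ball dynamic for the $\e(t)$-strongly convex function $\varphi_t(x)=f(x)+\frac{\e(t)}{2}\norm{x}^2$, in the spirit of Theorem~\ref{strong-conv-thm} but with $\sqrt{\mu}$ replaced by the time-dependent coefficient $\l\sqrt{\e(t)}$ and the anchor point taken to be the minimum-norm solution $x^*$. Setting $u(t)\eqdef x(t)-x^*$ and the velocity-corrected variable $v(t)\eqdef \l\sqrt{\e(t)}\,u(t)+\dot x(t)$, I would work with
\begin{equation*}
E(t)\eqdef \pa{\varphi_t(x(t))-\min_{\cH}f}+\tfrac12\norm{v(t)}^2 = \pa{f(x(t))-\min_{\cH}f}+\tfrac{\e(t)}{2}\norm{x(t)}^2+\tfrac12\norm{v(t)}^2 ,
\end{equation*}
so that $E(t)\ge f(x(t))-\min_{\cH}f\ge0$ and $E(t_1)=C$. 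The whole argument then reduces to the differential inequality
\begin{equation}\label{pp-di}
\dot E(t)\le -\mu(t)\,E(t)+\tfrac{\l\norm{x^*}^2}{2}\,\e^{\frac32}(t),\qquad t\ge t_1 .
\end{equation}
Indeed, \eqref{pp-di} gives $\frac{d}{dt}\pa{\gamma(t)E(t)}\le\tfrac{\l\norm{x^*}^2}{2}\,\e^{\frac32}(t)\gamma(t)$; integrating on $[t_1,t]$, using $\gamma(t_1)=1$, dividing by $\gamma(t)>0$, and bounding $f(x(t))-\min_{\cH}f\le E(t)$ yields exactly \eqref{basic-Lyap}.

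To obtain \eqref{pp-di} I would differentiate $E$ along the trajectory. Using $\frac{d}{dt}\varphi_t(x(t))=\dotp{\n\varphi_t(x(t))}{\dot x(t)}+\frac{\dot\e(t)}{2}\norm{x(t)}^2$, the identity $\dot v(t)=\frac{\l\dot\e(t)}{2\sqrt{\e(t)}}u(t)+(\l-\d)\sqrt{\e(t)}\,\dot x(t)-\n\varphi_t(x(t))$ (obtained by replacing $\ddot x$ via (TRIGS)), and noticing that the two occurrences of $\pm\dotp{\n\varphi_t(x(t))}{\dot x(t)}$ cancel, one is left with an expression for $\dot E(t)$ that is a quadratic form in $x(t),u(t),\dot x(t)$ with time-dependent coefficients, plus the single curvature term $-\l\sqrt{\e(t)}\,\dotp{\n\varphi_t(x(t))}{u(t)}$. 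Into the latter I would feed the $\e(t)$-strong convexity of $\varphi_t$ at $x(t)$ against $x^*$, together with $\varphi_t(x^*)=\min_{\cH}f+\frac{\e(t)}{2}\norm{x^*}^2$ (valid since $x^*\in\argmin f$): this produces simultaneously the dissipation $-\l\sqrt{\e(t)}\pa{\varphi_t(x(t))-\min_{\cH}f}$, the source term $\frac{\l}{2}\e^{\frac32}(t)\norm{x^*}^2$, and a harmless $-\frac{\l}{2}\e^{\frac32}(t)\norm{u(t)}^2$.

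What then remains is to verify that the leftover terms, whose sum I call $R(t)$, satisfy $R(t)+\frac{\mu(t)}{2}\norm{v(t)}^2\le\pa{\l\sqrt{\e(t)}-\mu(t)}\pa{\varphi_t(x(t))-\min_{\cH}f}$. Expanding $\norm{v(t)}^2=\l^2\e(t)\norm{u(t)}^2+2\l\sqrt{\e(t)}\dotp{u(t)}{\dot x(t)}+\norm{\dot x(t)}^2$ and inserting $\mu(t)=-\frac{\dot\e(t)}{2\e(t)}+(\d-\l)\sqrt{\e(t)}$, the $\dotp{u(t)}{\dot x(t)}$ cross-terms should cancel identically, leaving
\begin{equation*}
R(t)+\tfrac{\mu(t)}{2}\norm{v(t)}^2 = \tfrac{\dot\e(t)}{2}\norm{x(t)}^2 + \tfrac{\l^2\dot\e(t)}{4}\norm{u(t)}^2 + \tfrac{\l}{2}\e^{\frac32}(t)\pa{\l(\d-\l)-1}\norm{u(t)}^2 - \pa{\tfrac{(\d-\l)\sqrt{\e(t)}}{2}+\tfrac{\dot\e(t)}{4\e(t)}}\norm{\dot x(t)}^2 .
\end{equation*}
The first two terms are $\le0$ because $\dot\e\le0$; the coefficient $\l(\d-\l)-1\le0$ is equivalent to $\l^2-\d\l+1\ge0$, which is automatic when $0<\d\le2$ (nonpositive discriminant $\d^2-4$) and is precisely why $\l>\frac{\d+\sqrt{\d^2-4}}{2}$ is imposed when $\d>2$; and the $\norm{\dot x(t)}^2$-coefficient is $\le0$ because $-\frac{\dot\e(t)}{2\e(t)}\le(\d-\l)\sqrt{\e(t)}$, which is the ``$\d-\l$'' half of ${\rm(CD)}_\l$ (recall $\frac{d}{dt}\frac{1}{\sqrt{\e}}=-\frac{\dot\e}{2\e^{3/2}}$). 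Hence $R(t)+\frac{\mu(t)}{2}\norm{v(t)}^2\le0$; since moreover $\l\sqrt{\e(t)}-\mu(t)=\frac{\dot\e(t)}{2\e(t)}+(2\l-\d)\sqrt{\e(t)}\ge0$ — the ``$2\l-\d$'' half of ${\rm(CD)}_\l$ — and $\varphi_t(x(t))-\min_{\cH}f\ge0$, the required inequality, and with it \eqref{pp-di}, follows.

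The only genuinely delicate step is the bookkeeping in the middle two paragraphs: carrying out the expansions of $\dot E$ and of $\frac{\mu}{2}\norm{v}^2$ so that the $\dotp{u}{\dot x}$ terms cancel and the $\norm{u}^2$ coefficient collapses exactly to $\frac{\l}{2}\e^{\frac32}(t)\pa{\l(\d-\l)-1}$ — this is where the quadratic constraint tying $\l$ to $\d$ is forced — while keeping track of the fact that the two distinct bounds packaged in ${\rm(CD)}_\l$ are each used (the $2\l-\d$ bound for the function-value term, the $\d-\l$ bound for the $\norm{\dot x}^2$ term). Everything downstream of \eqref{pp-di} is just Grönwall's lemma, as described above.
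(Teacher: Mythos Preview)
Your proposal is correct and follows exactly the approach the paper indicates: the Lyapunov function you write down is precisely the function $\mathcal E(t)$ of \eqref{Lyapunov-a} (with $c(t)=\lambda\sqrt{\e(t)}$), which the paper cites from \cite{AL} as the basis of the proof. Your differential inequality \eqref{pp-di}, the cancellation of the $\dotp{u}{\dot x}$ cross-terms via the specific choice of $\mu(t)$, the role of the quadratic condition $\lambda^2-\delta\lambda+1\ge0$, and the separate use of the two halves of ${\rm(CD)}_\lambda$ to control the $\|\dot x\|^2$ coefficient and the sign of $\lambda\sqrt{\e(t)}-\mu(t)$, all check out line by line; since the paper itself only records the Lyapunov function and refers to \cite{AL} for the details, you have essentially reconstructed the intended argument.
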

The proof is based on the following Lyapunov function $\mathcal{E} : [t_0, +\infty[ \to \mathbb{R}_+,$
\begin{align}\label{Lyapunov-a}
\mathcal{E}(t):= \left( f(x(t)) - \min f \right) +\frac{\e(t)}{2}\|x(t)\|^2+ \frac{1}{2} \| c(t)(x(t)-x^\ast) + \dot{x}(t) \|^2,
\end{align}
where
the function $c:[t_0,+\infty[\to\R$ is chosen appropriately.
Based on this Lyapunov analysis, it is proved in \cite{AL} that $\liminf_{t\to+\infty} \| x(t)-x^\ast\| =0$.
We will improve this result, and show that $\lim_{t\to+\infty}
\| x(t)-x^\ast\| =0$. For this, we will develop a new Lyapunov analysis. Let us first recall some related previous results showing the progression of the understanding of these delicate questions, where the hierarchical minimization property is reached asymptotically. 

\subsection{Historical facts and related results}

In relation to hierarchical optimization, a rich literature has been devoted to the coupling of dynamic gradient systems with Tikhonov regularization, and to the study of the corresponding algorithms.

\subsubsection{First-order gradient dynamics}
 For first-order gradient systems and subdifferential inclusions, the asymptotic hierarchical minimization property which results from the introduction of a vanishing viscosity term in the dynamic (in our context the Tikhonov approximation \cite{Tikh,TA}) has been highlighted in a series of papers  \cite{AlvCab}, \cite{Att2},   \cite{AttCom}, \cite{AttCza2}, \cite{BaiCom}, \cite{CPS}, \cite{Hirstoaga}.
In parallel way, there is  a vast literature on convex descent algorithms involving Tikhonov and more
general penalty, regularization terms. The  historical evolution
 can be traced back to Fiacco and McCormick \cite{FM}, and the interpretation of interior point methods with the help of a vanishing logarithmic barrier.
Some more specific references for the coupling of Prox and Tikhonov can be found in
Cominetti \cite{Com}.
The time discretization  of  the first-order gradient systems and subdifferential inclusions involving multiscale (in time) features provides a natural link between the continuous and discrete  dynamics. The  resulting algorithms  combine  proximal based methods (for example forward-backward algorithms), with the viscosity of penalization methods, see
\cite{AttCzaPey1}, \cite{AttCzaPey2}, \cite{BotCse1}, \cite{Cabot-inertiel,Cab}, \cite{Hirstoaga}.

\medskip

\subsubsection{Second order gradient dynamics}

First studies concerning the coupling of damped inertial dynamics with Tikhonov approximation concerned  the heavy ball with friction system of Polyak \cite{Polyak},
where the damping coefficient $\gamma >0$ is  fixed. In   \cite{AttCza1} Attouch-Czarnecki considered the  system
\begin{equation}\label{HBF-Tikh}
 \ddot{x}(t) + \gamma \dot{x}(t) + \nabla f(x(t)) + \varepsilon (t) x(t) =0.
\end{equation}
In the slow parametrization case $\int_0^{+\infty} \varepsilon (t) dt = + \infty$, they proved that  any solution $x(\cdot)$ of \eqref{HBF-Tikh} converges strongly to the minimum norm element of $\argmin f$, see also \cite{JM-Tikh}.
 A parallel study has been developed  for PDE's, see \cite{AA} for damped hyperbolic equations with non-isolated equilibria, and
\cite{AlvCab} for semilinear PDE's.
The system \eqref{HBF-Tikh} is a special case of the general dynamic model
\begin{equation}\label{HBF-multiscale}
\ddot{x}(t) + \gamma \dot{x}(t) + \nabla f (x(t)) + \varepsilon (t) \nabla g (x(t))=0
\end{equation}
which involves two  functions $f$ and $g$ intervening with different time scale. When $\varepsilon (\cdot)$ tends to zero moderately slowly, it was shown in \cite{Att-Czar-last} that the trajectories of (\ref{HBF-multiscale})  converge asymptotically to equilibria that are solutions of the following hierarchical problem: they minimize the function $g$ on the set of minimizers of $f$.
When $\mathcal H= {\mathcal H}_1\times {\mathcal H}_2$ is a product space, defining for $x=(x_1,x_2)$, $f (x_1,x_2):= f_1 (x_1)+f_2 (x_2)$ and   $g(x_1,x_2):= \|A_1 x_1 -A_2 x_2 \|^2$, where the $A_i,\, i\in\{1,2\}$ are linear operators, (\ref{HBF-multiscale})  provides (weakly) coupled inertial systems.
The continuous and discrete-time versions of these systems have a natural connection to the best response dynamics for potential games \cite{AttCza2}, domain decomposition for PDE's \cite{abc2}, optimal transport \cite{abc}, coupled wave equations \cite{HJ2}.

\noindent In the quest for a faster convergence, the following system
\begin{equation}\label{edo001-0}
 \mbox{(AVD)}_{\alpha, \varepsilon} \quad \quad \ddot{x}(t) + \frac{\alpha}{t} \dot{x}(t) + \nabla f (x(t)) +\varepsilon(t) x(t)=0,
\end{equation}
has been studied by Attouch-Chbani-Riahi \cite{ACR}.
It is a Tikhonov regularization of the  dynamic
\begin{equation}\label{edo001}
 \mbox{(AVD)}_{\alpha} \quad \quad \ddot{x}(t) + \frac{\alpha}{t} \dot{x}(t) + \nabla f (x(t))=0,
\end{equation}
which was introduced by  Su, Boyd and
Cand\`es in \cite{SBC}. When $\alpha =3$, $\mbox{(AVD)}_{\alpha}$  can be viewed as a continuous version of the   accelerated gradient method of Nesterov.
It has been the subject of many recent studies which have given an in-depth understanding of the Nesterov acceleration method, see  \cite{AAD1}, \cite{AC10}, \cite{ACPR}, \cite{SBC}, \cite{Siegel}, \cite{WRJ}.

\subsection{Model results}
Let us illustrate our results in the case $\varepsilon (t) = \frac{1}{t^p}$. In section \ref{sec: particular}, we will prove the following result:

\begin{theorem}\label{thm:model-a}
Take $\e(t)=1/t^p$,\;   $0<p<2$.
Let $x : [t_0, +\infty[ \to \mathcal{H}$ be a solution trajectory of
$$
\ddot{x}(t) +  \frac{\d}{t^{\frac{p}{2}}}\dot{x}(t) + \nabla f\left(x(t) \right)+ \frac{1}{t^p} x(t)=0.
$$
Then, we have  the following rates of convergence for  the values and the trajectory:
\begin{eqnarray}
&&f(x(t))-\min_{\cH} f= \mathcal O \left( \displaystyle{ \frac{1}{t^{p} }  } \right) \mbox{ as } \; t \to +\infty\\
&&\|x(t) - x_{\varepsilon(t)}\|^2=\mathcal{O}\left(\dfrac{1}{ t^{\frac{2-p}2}}\right) \mbox{ as } \; t \to +\infty.
\end{eqnarray}
According to the strong  convergence of $x_{\varepsilon(t)}$ to the minimum norm solution, the above property  implies that 
$x(t)$ strongly converges to the  minimum norm solution.
\end{theorem}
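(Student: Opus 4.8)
The plan is to prove the two estimates by separate arguments: the rate for the values will be extracted from the Attouch--L\'aszl\'o bound \eqref{basic-Lyap} once the decay condition is checked, while the rate for the trajectory needs a new Lyapunov analysis built around the moving Tikhonov point $x_{\varepsilon(t)}$. First I would verify that $\varepsilon(t)=t^{-p}$ satisfies $\mathrm{(CD)}_\lambda$: since $1/\sqrt{\varepsilon(t)}=t^{p/2}$ we have $\frac{d}{dt}\big(1/\sqrt{\varepsilon(t)}\big)=\frac{p}{2}\,t^{p/2-1}$, which for $0<p<2$ is positive and tends to $0$; hence, fixing any admissible $\lambda$ (namely $\frac{\delta}{2}<\lambda<\delta$ when $0<\delta\le 2$, resp.\ $\frac{\delta+\sqrt{\delta^2-4}}{2}<\lambda<\delta$ when $\delta>2$), the constant $\min(2\lambda-\delta,\delta-\lambda)$ is positive and \eqref{AL-growth} holds for all $t\ge t_1$ with $t_1$ large enough, so Theorem~\ref{AL1} applies.

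For the values, it then suffices to make the right-hand side of \eqref{basic-Lyap} explicit. With $\varepsilon(t)=t^{-p}$ one gets $\mu(t)=-\frac{\dot\varepsilon(t)}{2\varepsilon(t)}+(\delta-\lambda)\sqrt{\varepsilon(t)}=\frac{p}{2t}+(\delta-\lambda)t^{-p/2}$, hence, with $a:=\frac{2(\delta-\lambda)}{2-p}>0$,
\[
\gamma(t)=K\,t^{p/2}\exp\!\big(a\,t^{1-p/2}\big),\qquad K:=t_1^{-p/2}\,e^{-a t_1^{1-p/2}}.
\]
Since $\varepsilon^{3/2}(s)\gamma(s)=K\,s^{-p}e^{a s^{1-p/2}}$ and $\frac{d}{ds}e^{a s^{1-p/2}}=a\big(1-\tfrac p2\big)s^{-p/2}e^{a s^{1-p/2}}$, an integration by parts gives $\int_{t_1}^t s^{-p}e^{a s^{1-p/2}}\,ds=\frac{1}{a(1-p/2)}\,t^{-p/2}e^{a t^{1-p/2}}$ up to a remainder carrying an extra factor $s^{p/2-1}\to 0$, hence negligible; dividing by $\gamma(t)$ yields $\frac{1}{\gamma(t)}\int_{t_1}^t\varepsilon^{3/2}(s)\gamma(s)\,ds=\mathcal{O}(t^{-p})$, and $C/\gamma(t)=o(t^{-p})$ because $\gamma(t)$ grows faster than any power of $t$. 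Plugging these into \eqref{basic-Lyap} gives $f(x(t))-\min_\cH f=\mathcal{O}(1/t^p)$.

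The core of the proof is the trajectory estimate, which does not follow from Theorem~\ref{AL1} (that bound, together with $\frac{\varepsilon(t)}{2}\|x(t)-x_{\varepsilon(t)}\|^2\le\varphi_t(x(t))-\varphi_t(x_{\varepsilon(t)})$, only yields boundedness of $x(\cdot)$, because of the $\frac{\varepsilon(t)}{2}\|x(t)\|^2$ term, itself of order $\varepsilon(t)$). Instead I would run a Lyapunov analysis on a heavy-ball energy attached to the \emph{moving} minimizer, for instance
\[
\cE(t)=\big(\varphi_t(x(t))-\varphi_t(x_{\varepsilon(t)})\big)+\tfrac12\big\|\lambda\sqrt{\varepsilon(t)}\,\big(x(t)-x_{\varepsilon(t)}\big)+\dot x(t)\big\|^2,
\]
the analogue of the Lyapunov function behind Theorem~\ref{strong-conv-thm} with $\mu$ replaced by $\varepsilon(t)$ and the unique minimizer replaced by $x_{\varepsilon(t)}$. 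Differentiating and substituting $\ddot x+\delta\sqrt{\varepsilon}\,\dot x+\nabla\varphi_t(x)=0$, the terms $\dotp{\nabla\varphi_t(x(t))}{\dot x(t)}$ cancel, the $\varepsilon(t)$-strong convexity of $\varphi_t$ produces a dissipation $-\lambda\sqrt{\varepsilon(t)}\,\cE(t)$ (the conditions on $\lambda$ from $\mathrm{(CD)}_\lambda$ are precisely what force the leftover quadratic form in $\dot\cE$ to be nonpositive), and what remains is an error term driven by $\dot\varepsilon(t)$ and by the velocity of the Tikhonov path, the latter controlled by the classical estimate $\big\|\tfrac{d}{dt}x_{\varepsilon(t)}\big\|\le\frac{|\dot\varepsilon(t)|}{\varepsilon(t)}\|x^*\|$; using boundedness of $x(\cdot)$ this error is $\mathcal{O}(|\dot\varepsilon(t)|)=\mathcal{O}(t^{-p-1})$. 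Integrating $\dot\cE(t)\le-\lambda\sqrt{\varepsilon(t)}\,\cE(t)+\mathrm{err}(t)$ with the factor $\exp\!\big(\lambda\int_{t_1}^t\sqrt{\varepsilon}\big)$ and using $\mathrm{(CD)}_\lambda$ together with $\int_{t_0}^{+\infty}\sqrt{\varepsilon(t)}\,dt=+\infty$, the same integration-by-parts computation as above balances the damping $\sqrt{\varepsilon(t)}=t^{-p/2}$ against $\mathrm{err}(t)$ and gives $\cE(t)=\mathcal{O}\big(\mathrm{err}(t)/\sqrt{\varepsilon(t)}\big)=\mathcal{O}(t^{-1-p/2})=\mathcal{O}\big(\varepsilon(t)\,t^{-(2-p)/2}\big)$; since $\varphi_t$ is $\varepsilon(t)$-strongly convex, $\|x(t)-x_{\varepsilon(t)}\|^2\le\frac{2}{\varepsilon(t)}\cE(t)=\mathcal{O}\big(1/t^{(2-p)/2}\big)$.

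Finally, the Browder--Tikhonov theorem gives $x_{\varepsilon(t)}\to x^*$ strongly (and $\|x_{\varepsilon(t)}\|\le\|x^*\|$, used above), so combining with $\|x(t)-x_{\varepsilon(t)}\|\to 0$ yields $x(t)\to x^*$. I expect the main obstacle to be exactly the third step: designing the Lyapunov function so that after the cancellations the remainder is genuinely dissipative (this is what pins down the admissible range of $\lambda$), handling the error terms created by the moving target $x_{\varepsilon(t)}$, and then running the Gronwall estimate with the correct weight so that these errors are absorbed at the sharp exponent $(2-p)/2$ rather than at a cruder rate.
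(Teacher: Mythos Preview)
Your Lyapunov function $\cE(t)$ is exactly the paper's $E(t)$ (equation \eqref{3}), and this single function already delivers \emph{both} estimates via Lemma~\ref{lem-basic}: once you know $\cE(t)=\mathcal O(t^{-(p+2)/2})$, the value bound follows from $f(x(t))-\min f\le \cE(t)+\tfrac{\varepsilon(t)}{2}\|x^*\|^2$, and the trajectory bound from $\|x(t)-x_{\varepsilon(t)}\|^2\le 2\cE(t)/\varepsilon(t)$. So your separate detour through Theorem~\ref{AL1} for the values is redundant; the paper runs only the moving-target Lyapunov analysis and reads off both rates from it.

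There is, however, a genuine gap in your error accounting. When you differentiate $\cE$, the term $\dot v(t)$ contains $-\tau(t)\tfrac{d}{dt}x_{\varepsilon(t)}$, so $\langle\dot v,v\rangle$ produces the cross term $-\tau(t)\langle\tfrac{d}{dt}x_{\varepsilon(t)},\dot x(t)\rangle$. With $\tau(t)=\lambda\sqrt{\varepsilon(t)}$ and $\|\tfrac{d}{dt}x_{\varepsilon(t)}\|\le\frac{|\dot\varepsilon|}{\varepsilon}\|x^*\|$, a crude bound via boundedness of $\dot x$ only gives
\[
\Big|\tau(t)\langle\tfrac{d}{dt}x_{\varepsilon(t)},\dot x(t)\rangle\Big|\;\le\; C\,\frac{|\dot\varepsilon(t)|}{\sqrt{\varepsilon(t)}}\;=\;\mathcal O\big(t^{-p/2-1}\big),
\]
which is \emph{larger} than your claimed $\mathcal O(|\dot\varepsilon(t)|)=\mathcal O(t^{-p-1})$. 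Feeding this into your Gronwall step yields $\cE(t)=\mathcal O(t^{-1})$ and hence $\|x(t)-x_{\varepsilon(t)}\|^2=\mathcal O(t^{p-1})$, which fails to converge for $p\ge 1$ and is weaker than $t^{-(2-p)/2}$ even for $p<1$. The paper avoids this by applying Young's inequality,
\[
-\tau(t)\langle\tfrac{d}{dt}x_{\varepsilon(t)},\dot x(t)\rangle\le \frac{\tau(t)}{2a}\|\dot x(t)\|^2+\frac{a\,\tau(t)}{2}\Big\|\tfrac{d}{dt}x_{\varepsilon(t)}\Big\|^2,
\]
absorbing the first piece into the dissipative quadratic form (this is exactly why the paper's hypothesis $(\mathcal H_1)$ carries the extra parameter $a>1$ compared to $\mathrm{(CD)}_\lambda$: it tightens $\delta-\lambda$ to $\delta-\frac{a+1}{a}\lambda$), and bounding the second piece by $\frac{a\lambda}{2}\,\dot\varepsilon(t)^2/\varepsilon(t)^{3/2}=\mathcal O(t^{-p/2-2})$, which is now subleading relative to $|\dot\varepsilon(t)|$. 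The analogous cross term with $x(t)-x_{\varepsilon(t)}$ is handled the same way (parameter $b=c\lambda$). Once this splitting is in place, your final rate computation $\cE(t)=\mathcal O(\mathrm{err}(t)/\sqrt{\varepsilon(t)})=\mathcal O(t^{-1-p/2})$ goes through as written.
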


With many respect, these results represent an important advance compared to previous works:

\smallskip

i) Let us underline  that in our approach the rapid convergence of the values and the strong convergence towards the solution of  minimum norm are obtained for the same dynamic, whereas in the previous works \cite{ACR}, \cite{AttCza1}, they are obtained for different dynamics corresponding to different settings of the parameters.
Moreover, we obtain the strong convergence of the trajectories to 
the minimum norm solution, whereas in \cite{AL} and \cite{ACR} it was only obtained that $\liminf_{t \to + \infty}{\| x(t) - x^\ast \|} = 0.$
It is clear that the results extend naturally to obtaining  strong convergence towards the solution closest to a desired state $x_d$.
It suffices to replace in Tikhonov's approximation $\|x\|^2$ by 
$\|x-x_d\|^2$. This is important for inverse problems.
In addition, we obtain a convergence rate of the values which is
better than the one obtained in\cite{AL}. 

\smallskip

ii) These results show the trade-off between the property of rapid convergence of values, and the property of strong convergence towards the minimum norm solution. The two rates of convergence
move in opposite directions with $p$ varies.
The determination of a good compromise between these two antagonistic criteria is an interesting subject that we will consider later.

\smallskip

iii) Note that at the limit, when $ p = 2 $, which is the most interesting case to obtain a fast convergence of values comparable to the accelerated gradient method of Nesterov, then our analysis does not allow us to conclude that the trajectories are converging towards
the solution of  minimum norm. This question remains open,
the interested reader can consult \cite{AL}.

\subsection{Contents}
The paper is organized as follows.
In section \ref{sec:general}, for a general Tikhonov regularization parameter $\varepsilon(t)$, we study the asymptotic convergence properties of the solution trajectories of (TRIGS). Based on Lyapunov analysis, we show their strong convergence to the element of minimum norm of $S$, and we provide  convergence rate of the values.
In section \ref{sec: particular}, we apply these results to 
the particular case $\varepsilon (t) =\frac{1}{t^p}$.
Section \ref{non-smooth} considers the extension of these results to the nonsmooth case.
Section \ref{num} contains numerical illustrations. We conclude in section \ref{sec:Conclusion}
with some perspective and open questions.

\section{Convergence analysis for general $\varepsilon(t)$}\label{sec:general}
We are going to analyze via Lyapunov analysis the convergence properties as $t\to +\infty$ of the solution trajectories of the inertial dynamic (TRIGS) that we recall below
\begin{equation}\label{1}
 \ddot{x}(t) + \d\sqrt{\e(t)}  \dot{x}(t) + \nabla f (x(t)) + \varepsilon (t) x(t) =0.
\end{equation}
Throughout the paper, we assume that $t_0$ is the origin of time, $\delta$ is a positive parameter.
For each $t \geq t_0$, let us introduce the function $\varphi_{t} : \cH \to \R$ defined by
\begin{equation}\label{phi}
\varphi_{t}(x):= f(x)+\dfrac{\varepsilon(t)}{2}\|x\|^{2},
\end{equation}
and set
$$
x_{\varepsilon(t)}:= \argmin_{\cH}\varphi_{t},
$$
which is the unique minimizer of the strongly convex function $\varphi_{t}$. The first order optimality condition gives 
\begin{equation}\label{opt}
\nabla f(x_{\varepsilon(t)})+\varepsilon(t)x_{\varepsilon(t)}=0.
\end{equation}
The following properties are immediate consequences of  the classical properties of the Tikhonov regularization
\begin{eqnarray}
&& \forall t\geq t_0 \;\;  \|x_{\varepsilon(t)}\|\leq \|x^{*}\|
\label{2a}  \vspace{3mm}\\
&&  \lim_{t\rightarrow +\infty}\|x_{\varepsilon(t)}-x^{*}\|=0 \quad\hbox{where}\: x^{*}=\mbox{proj}_{\argmin f} 0.\label{2b}
\end{eqnarray}

\subsection{Preparatory results for Lyapunov analysis}
Let us introduce the real-valued function function $t \in [t_0, +\infty[ \mapsto E(t) \in \R^+$ that plays a key role in our Lyapunov analysis. It is defined by
\begin{equation}\label{3}
E(t):=\left(\varphi_{t}(x(t))-\varphi_{t}(x_{\varepsilon(t)})\right) +\frac{1}{2}\|v(t)\|^{2},
\end{equation}
where $\varphi_{t}$ has been defined in (\ref{phi}), and
\begin{equation}\label{3b}
v(t)=\tau(t)\left(x(t)-x_{\varepsilon(t)}\right)+\dot{x}(t).
\end{equation}
The time dependent parameter $\tau (\cdot)$  will be adjusted during the proof.
We will show that under judicious choice of the parameters, then $t \mapsto E(t)$ is a decreasing function. Moreover, we will estimate the rate of convergence of $E(t)$ towards zero. 
This will provide the rates of convergence of values and trajectories, as the following lemma shows.

\begin{lemma}\label{lem-basic}
Let   $x(\cdot): [t_0, + \infty[ \to \cH$ be a solution trajectory of  the damped inertial dynamic {\rm(TRIGS)}, and $t \in [t_0, +\infty[ \mapsto E(t) \in \R^+$ be the energy function defined in \eqref{3}. Then, the following inequalites are satisfied:   for any $t\geq t_0$, 

\begin{eqnarray}
&&f(x(t))-\min_{\mathcal H}f	
	 \leq  E(t)+\dfrac{\varepsilon(t)}{2}\|x^*\|^{2}; \label{keybb}\\
	&&\|x(t) - x_{\varepsilon(t)}\|^2  \leq \frac{2E(t)}{\varepsilon(t)} \label{est:basic1}.
\end{eqnarray}
 Therefore, $x(t)$ converges strongly to $x^*$
as soon as 
$
\lim_{t\to +\infty} \displaystyle{\frac{E(t)}{\varepsilon(t)}}=0.
$
 \end{lemma}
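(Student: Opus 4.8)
The plan is to establish both inequalities by purely convex-analytic arguments, \emph{without} invoking the differential equation \eqref{1} at all; the dynamics will only be needed later, when estimating the decay rate of $E(t)$. The two ingredients are the nonnegativity of the term $\frac12\|v(t)\|^2$ entering $E(t)$, and the $\varepsilon(t)$-strong convexity of $\varphi_t$, whose unique minimizer is $x_{\varepsilon(t)}$.

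First I would prove \eqref{keybb}. Since $\frac12\|v(t)\|^2\ge 0$, the definition \eqref{3} gives $E(t)\ge \varphi_t(x(t))-\varphi_t(x_{\varepsilon(t)})$. Writing $\varphi_t(x(t))=f(x(t))+\frac{\varepsilon(t)}{2}\|x(t)\|^2$, and using that $x_{\varepsilon(t)}$ minimizes $\varphi_t$ so that $\varphi_t(x_{\varepsilon(t)})\le \varphi_t(x^*)=f(x^*)+\frac{\varepsilon(t)}{2}\|x^*\|^2=\min_{\cH}f+\frac{\varepsilon(t)}{2}\|x^*\|^2$ (here $x^*\in\argmin f$, hence $f(x^*)=\min_{\cH}f$), one obtains $E(t)\ge f(x(t))+\frac{\varepsilon(t)}{2}\|x(t)\|^2-\min_{\cH}f-\frac{\varepsilon(t)}{2}\|x^*\|^2$. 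Dropping the nonnegative term $\frac{\varepsilon(t)}{2}\|x(t)\|^2$ and rearranging yields \eqref{keybb}.

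Next I would prove \eqref{est:basic1}. Since $\varphi_t$ is $\varepsilon(t)$-strongly convex with minimizer $x_{\varepsilon(t)}$, the standard quadratic growth bound at a minimizer gives $\varphi_t(y)-\varphi_t(x_{\varepsilon(t)})\ge \frac{\varepsilon(t)}{2}\|y-x_{\varepsilon(t)}\|^2$ for every $y\in\cH$; applying this with $y=x(t)$ and again using $E(t)\ge \varphi_t(x(t))-\varphi_t(x_{\varepsilon(t)})$ (because $\frac12\|v(t)\|^2\ge 0$) gives $E(t)\ge \frac{\varepsilon(t)}{2}\|x(t)-x_{\varepsilon(t)}\|^2$, which is exactly \eqref{est:basic1}. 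The same chain incidentally shows $E(t)\ge 0$, consistent with the stated range $E(t)\in\R^+$.

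Finally, for the strong convergence: if $\lim_{t\to+\infty}E(t)/\varepsilon(t)=0$, then \eqref{est:basic1} forces $\|x(t)-x_{\varepsilon(t)}\|\to 0$, and combining this with the classical Tikhonov property \eqref{2b}, namely $\|x_{\varepsilon(t)}-x^*\|\to 0$, via the triangle inequality gives $\|x(t)-x^*\|\to 0$. I do not expect a genuine obstacle in this lemma; the only two points deserving care are that $x^*$ is feasible but in general not optimal for $\varphi_t$ (so only the inequality $\varphi_t(x_{\varepsilon(t)})\le\varphi_t(x^*)$ is available), and that the strong-convexity growth inequality must be applied precisely at the minimizer $x_{\varepsilon(t)}$, where the first-order term drops out. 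Everything else is immediate from the definitions.
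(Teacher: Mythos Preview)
Your proof is correct and follows essentially the same approach as the paper: both arguments hinge on the inequality $E(t)\ge \varphi_t(x(t))-\varphi_t(x_{\varepsilon(t)})$ (from $\frac12\|v(t)\|^2\ge 0$), the comparison $\varphi_t(x_{\varepsilon(t)})\le \varphi_t(x^*)$, the $\varepsilon(t)$-strong convexity of $\varphi_t$ at its minimizer, and the Tikhonov property \eqref{2b}. The only cosmetic difference is that the paper organizes part~\eqref{keybb} by starting from $f(x(t))-\min_{\cH}f$ and decomposing forward, whereas you start from $E(t)$ and bound downward; the substance is identical.
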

 \begin{proof}
 
 $i)$
 According to the definition of $ \varphi_{t}$,  we have 
 \begin{equation*}
\begin{array}{lll}
f(x(t))-\min_{\mathcal H}f	& = &  \varphi_{t}(x(t))-\varphi_{t}(x^*)+\dfrac{\varepsilon(t)}{2}\left(\|x^*\|^{2}-\|x(t)\|^{2}\right)  \\ 
	& = & \left[\varphi_{t}(x(t))-\varphi_{t}(x_{\varepsilon(t)})\right]+\left[\underbrace{\varphi_{t}(x_{\varepsilon(t)})-\varphi_{t}(x^*)}_{\leq 0}\right]+\dfrac{\varepsilon(t)}{2}\left(\|x^*\|^{2}-\|x(t)\|^{2}\right)\\
	& \leq  &\varphi_{t}(x(t))-\varphi_{t}(x_{\varepsilon(t)})+\dfrac{\varepsilon(t)}{2}\|x^*\|^{2}.
\end{array}
\end{equation*}
By definition of $E(t)$ we have 
 \begin{equation}\label{E_phi}
  \varphi_{t}(x(t))-\varphi_{t}(x_{\varepsilon(t)}) \leq E(t)
  \end{equation}
 which, combined with the above inequality, gives \eqref{keybb}.

\smallskip

$ii)$
By the strong convexity of $\varphi_{t}$, and 
$x_{\varepsilon(t)}:= \argmin_{\cH}\varphi_{t}$, we have
$$
\varphi_{t}(x(t))-\varphi_{t}(x_{\varepsilon(t)} \geq \frac{\varepsilon (t)}{2}  \|x(t) - x_{\varepsilon(t)}\|^2 .
$$
By combining the  inequality above with \eqref{E_phi}, we get
$$
E(t) \geq \frac{\varepsilon (t)}{2}  \|x(t) - x_{\varepsilon(t)}\|^2 ,
$$
which gives \eqref{est:basic1}.\\
 By the assumption $ \mathcal{H}_0)$, we have $\lim_{t \to \infty} \varepsilon (t) =0$. According to \eqref{2b}, we deduce that  $x_{\varepsilon(t)}$ converges strongly to $x^*$.
 The conclusion is a direct consequence of inequality \eqref{est:basic1}.
\end{proof}

To estimate $E(t)$, we will show that it satisfies a first order differential inequality of the form
\begin{equation}\label{est:basic2}
 \dot{E}(t)+\mu(t)E(t)\leq \rho (t) \|x^{*}\|^{2}
 \end{equation}
\noindent  where $\rho(t)$ and $\mu(t)$ are positive functions that will be made precise in the proof.
 So the first step of the proof is to compute $\dfrac{d}{dt}E(t)$.
The computation  of $\dfrac{d}{dt}E(t)$  involves the two terms$\dfrac{d}{dt}\left(\varphi_{t}(x_{\varepsilon(t)})\right)$ and $\dfrac{d}{dt}\left(x_{\varepsilon(t)}\right).$ They are evaluated in the lemma below.
\begin{lemma}\label{lem1}
For each $t \geq t_0$, we have
 \begin{itemize}
 	\item[$i)$] $\dfrac{d}{dt}\left(\varphi_{t}(x_{\varepsilon(t)})\right)=\frac{1}{2}\dot{\varepsilon}(t)\|x_{\varepsilon(t)}\|^{2};$
 	\item[$ii)$] $\left\|\dfrac{d}{dt}\left(x_{\varepsilon(t)}\right)\right\|^{2} \leq -\dfrac{\dot{\varepsilon}(t)}{\varepsilon(t)} \left\langle \dfrac{d}{dt}\left(x_{\varepsilon(t)}\right), x_{\varepsilon(t)}\right\rangle$.
 \end{itemize}
 	Therefore, $\left\|\dfrac{d}{dt}\left(x_{\varepsilon(t)}\right)\right\|\leq -\dfrac{\dot{\varepsilon}(t)}{\varepsilon(t)} \|x_{\varepsilon(t)}\|. $
 
\end{lemma}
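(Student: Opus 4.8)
\textbf{Proof plan for Lemma \ref{lem1}.}

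The plan is to exploit the defining identity \eqref{opt}, namely $\nabla f(x_{\varepsilon(t)})+\varepsilon(t)x_{\varepsilon(t)}=0$, in two different ways: once by direct differentiation of the composite map $t\mapsto\varphi_{t}(x_{\varepsilon(t)})$, and once by differentiating the optimality condition itself in order to control $\frac{d}{dt}(x_{\varepsilon(t)})$. For part $i)$, I would write $\varphi_{t}(x_{\varepsilon(t)}) = f(x_{\varepsilon(t)}) + \frac{\varepsilon(t)}{2}\|x_{\varepsilon(t)}\|^{2}$ and apply the chain rule, getting
\[
\frac{d}{dt}\left(\varphi_{t}(x_{\varepsilon(t)})\right)
= \left\langle \nabla f(x_{\varepsilon(t)}) + \varepsilon(t) x_{\varepsilon(t)},\ \frac{d}{dt}(x_{\varepsilon(t)})\right\rangle
+ \frac{1}{2}\dot{\varepsilon}(t)\|x_{\varepsilon(t)}\|^{2}.
\]
The first bracket vanishes by \eqref{opt}, leaving exactly $\frac{1}{2}\dot{\varepsilon}(t)\|x_{\varepsilon(t)}\|^{2}$. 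The only technical point to mention is that $t\mapsto x_{\varepsilon(t)}$ is differentiable; this follows from the implicit function theorem applied to \eqref{opt} (using that $\nabla f$ is locally Lipschitz, together with the strong monotonicity of $\nabla f + \varepsilon(t)I$, which gives invertibility of the relevant operator), but since the paper works formally with these derivatives it is enough to invoke this regularity.

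For part $ii)$, I would differentiate the optimality condition \eqref{opt} with respect to $t$. Writing $w(t) := \frac{d}{dt}(x_{\varepsilon(t)})$, differentiating $\nabla f(x_{\varepsilon(t)}) + \varepsilon(t) x_{\varepsilon(t)} = 0$ yields
\[
\nabla^{2} f(x_{\varepsilon(t)})\, w(t) + \varepsilon(t)\, w(t) + \dot{\varepsilon}(t)\, x_{\varepsilon(t)} = 0 .
\]
Now take the scalar product with $w(t)$. Since $f$ is convex, $\langle \nabla^{2} f(x_{\varepsilon(t)}) w(t), w(t)\rangle \ge 0$, hence
\[
\varepsilon(t)\,\|w(t)\|^{2} \le -\dot{\varepsilon}(t)\,\langle x_{\varepsilon(t)}, w(t)\rangle ,
\]
which is precisely inequality $ii)$ after dividing by $\varepsilon(t) > 0$. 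The concluding bound then follows by combining $ii)$ with the Cauchy--Schwarz inequality: $\|w(t)\|^{2} \le -\frac{\dot{\varepsilon}(t)}{\varepsilon(t)} \|w(t)\|\,\|x_{\varepsilon(t)}\|$, and dividing through by $\|w(t)\|$ (the case $w(t)=0$ being trivial) gives $\|w(t)\| \le -\frac{\dot{\varepsilon}(t)}{\varepsilon(t)}\|x_{\varepsilon(t)}\|$; note $-\dot{\varepsilon}(t)\ge 0$ since $\varepsilon$ is nonincreasing, so the right-hand side is nonnegative.

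The main obstacle, if one wants to be fully rigorous rather than formal, is justifying that $t \mapsto x_{\varepsilon(t)}$ is indeed (absolutely) differentiable and that $\nabla^{2} f$ exists in the sense used above — $f$ is only assumed $\mathcal{C}^{1}$ with $\nabla f$ locally Lipschitz, so $\nabla^{2} f$ exists only almost everywhere (Rademacher). A cleaner route avoiding second derivatives is to use monotonicity of $\nabla f$ directly on difference quotients: for $s,t$ close, subtract the optimality conditions at $s$ and $t$, pair with $x_{\varepsilon(s)} - x_{\varepsilon(t)}$, use $\langle \nabla f(x_{\varepsilon(s)}) - \nabla f(x_{\varepsilon(t)}), x_{\varepsilon(s)} - x_{\varepsilon(t)}\rangle \ge 0$, obtain $\varepsilon(t)\|x_{\varepsilon(s)} - x_{\varepsilon(t)}\|^{2} \le -(\varepsilon(s)-\varepsilon(t))\langle x_{\varepsilon(s)}, x_{\varepsilon(s)} - x_{\varepsilon(t)}\rangle$, and pass to the limit in the corresponding difference-quotient inequality. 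This establishes both the Lipschitz continuity of $t\mapsto x_{\varepsilon(t)}$ (hence a.e. differentiability) and inequality $ii)$ simultaneously, and it parallels the standard Tikhonov-path estimates already alluded to in \eqref{2a}--\eqref{2b}.
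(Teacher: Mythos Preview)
Your proof is correct. For part $i)$ you take a different route from the paper: you apply the chain rule directly to $f(x_{\varepsilon(t)})+\frac{\varepsilon(t)}{2}\|x_{\varepsilon(t)}\|^{2}$ and cancel the term $\langle \nabla f(x_{\varepsilon(t)})+\varepsilon(t)x_{\varepsilon(t)},\frac{d}{dt}x_{\varepsilon(t)}\rangle$ via \eqref{opt}. The paper instead recognizes $\varphi_{t}(x_{\varepsilon(t)})=f_{1/\varepsilon(t)}(0)$ as a Moreau envelope and invokes the known identity $\frac{d}{d\lambda}f_{\lambda}(x)=-\frac{1}{2}\|\nabla f_{\lambda}(x)\|^{2}$ (citing \cite{Att-book,Att2}), combined with $\nabla f_{1/\varepsilon(t)}(0)=-\varepsilon(t)x_{\varepsilon(t)}$. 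Your argument is more elementary and self-contained; the paper's has the advantage that the Moreau-envelope identity already carries the required regularity, so differentiability of $t\mapsto x_{\varepsilon(t)}$ need not be justified separately for part $i)$.

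For part $ii)$, your first argument via $\nabla^{2}f$ indeed needs more than the standing hypothesis $f\in\mathcal{C}^{1}$ with locally Lipschitz gradient, as you yourself note. Your alternative route --- subtract the optimality conditions at two nearby times, pair with the difference $x_{\varepsilon(s)}-x_{\varepsilon(t)}$, use monotonicity of $\nabla f$, divide by $h^{2}$ and let $h\to 0$ --- is exactly the argument the paper gives. So after your self-correction the two proofs of $ii)$ coincide, and the concluding Cauchy--Schwarz step is identical.
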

\begin{proof}  
	$i)$  We have    $\varphi_{t}(x_{\varepsilon(t)})=\inf_{y\in H} \{f(y)+\frac{\varepsilon(t)}{2}\|y-0\|^{2}\}=f_{\frac{1}{\varepsilon(t)}}(0).$ \\
	 Since  $ \dfrac{d}{d\lambda}f_{\lambda}(x)=-\frac{1}{2}\|\nabla f_{\lambda}(x)\|^{2}, $ (see \cite[Lemma 3.27]{Att-book}, \cite[Corollary 6.2]{Att2}),    we have:
	$$ \dfrac{d}{dt}f_{\lambda(t)}(x)=-\frac{\dot{\lambda}(t)}{2}\|\nabla f_{\lambda(t)}(x)\|^{2} .$$ 
	Therefore, 
	\begin{equation}\label{4}
	 \dfrac{d}{dt} \varphi_{t}(x_{\varepsilon(t)})=\dfrac{d}{dt}\left(f_{\frac{1}{\varepsilon(t)}}(0)\right)=\frac{1}{2}\dfrac{\dot{\varepsilon}(t)}{\varepsilon^{2}(t)}\|\nabla f_{\frac{1}{\varepsilon(t)}}(0)\|^{2}.
	\end{equation}
On the other hand, we have 
$$ \nabla\varphi_{t}((x_{\varepsilon(t)})=0\Longleftrightarrow \nabla f(x_{\varepsilon(t)})+\varepsilon(t)x_{\varepsilon(t)}=0\Longleftrightarrow x_{\varepsilon(t)}=J^{f}_{\frac{1}{\varepsilon(t)}}(0).$$	
Since  $\nabla f_{\frac{1}{\varepsilon(t)}}(0)=\varepsilon(t)\left(0- J^{f}_{\frac{1}{\varepsilon(t)}}(0)\right), $  we get  
$ \nabla f_{\frac{1}{\varepsilon(t)}}(0)=-\varepsilon(t)x_{\varepsilon(t)} $.  This combined with \eqref{4} gives
$$ \dfrac{d}{dt} \varphi_{t}(x_{\varepsilon(t)})=\frac{1}{2}\dot{\varepsilon}(t)\|x_{\varepsilon(t)}\|^{2}.$$ 
\item[$ii)$] We have 
$$ -\varepsilon(t)x_{\varepsilon(t)}=\nabla f(x_{\varepsilon(t)})\quad\hbox{and}\quad -\varepsilon(t+h)x_{\varepsilon(t+h)}=\nabla f(x_{\varepsilon(t+h)}). $$
According to the monotonicity  of $\nabla f,$ we have
$$ \langle \varepsilon(t)x_{\varepsilon(t)}-\varepsilon(t+h)x_{\varepsilon(t+h)},x_{\varepsilon(t+h)}- x_{\varepsilon(t)}  \rangle \geq 0 ,$$
which implies 
$$ -\varepsilon(t)\|u_{\varepsilon(t+h)}- u_{\varepsilon(t)}\|^{2} + \left(\varepsilon(t)-\varepsilon(t+h)\right) \langle u_{\varepsilon(t+h)},u_{\varepsilon(t+h)}- u_{\varepsilon(t)}  \rangle \geq 0 .$$
After division by $h^{2},$ we obtain 
$$ \dfrac{ \left(\varepsilon(t)-\varepsilon(t+h)\right)}{h} \left\langle x_{\varepsilon(t+h)},\dfrac{x_{\varepsilon(t+h)}- x_{\varepsilon(t)}}{h} \right \rangle \geq \varepsilon(t)\left\|\dfrac{x_{\varepsilon(t+h)}- x_{\varepsilon(t)}}{h}\right\|^{2} .$$
By letting $h \rightarrow 0,$ we get 
 $$-\dot{\varepsilon}(t) \left\langle x_{\varepsilon(t)},\dfrac{d}{dt}x_{\varepsilon(t)} \right \rangle \geq \varepsilon(t)\left\|\dfrac{d}{dt}x_{\varepsilon(t)} \right\|^{2} ,$$
which completes the proof. 
\end{proof}
\subsection{Main result}
 Given a general parameter $\varepsilon(\cdot)$, let's proceed with the Lyapunov analysis.

\begin{theorem}\label{strong-conv-thm-b}
Suppose that $f: \cH \to \mathbb R$ is a convex function of class ${\mathcal C}^1$.
Let  $x(\cdot): [t_0, + \infty[ \to \cH$ be a solution trajectory of the system {\rm(TRIGS)} with $\delta>0$
\begin{equation*}
 \ddot{x}(t) + \d\sqrt{\e(t)}  \dot{x}(t) + \nabla f (x(t)) + \varepsilon (t) x(t) =0.
\end{equation*}
Let us assume that there exists $a,c>1$  and $t_1 \geq t_0$ such that for all $t \geq t_1 ,$
$$( \mathcal{H}_1) \qquad \dfrac{d}{dt}\left(\dfrac{1}{\sqrt{\varepsilon(t)}}\right) \leq 
\min\left(2\lambda-\delta\; , \; \delta-\frac{a+1}{a}\lambda\right),
$$
where $\lambda$ is  such that
$$\frac{\delta}{2}< \lambda <\frac{a}{a+1}\delta \; \mbox{ for }  0 <\delta \leq 2-\frac{1}{c}, \quad 
\frac12\left(\delta+\frac{1}{c}+\sqrt{(\delta+\frac{1}{c})^{2}-4}\right)
< \lambda <\frac{a}{a+1}\delta \;  \mbox{ for } \delta > 2-\frac{1}{c}.
$$
 Then, the following property holds:  
\begin{equation}\label{Lyap-basic1}
  E(t)\leq \dfrac{\|x^{*}\|^{2}}{2}\dfrac{\displaystyle \int_{t_1}^{t}\left[\left((\lambda c+a)\lambda\dfrac{\dot{\varepsilon}^{2}(s)}{\varepsilon^{\frac{3}{2}}(s)}-\dot{\varepsilon}(s)\right) \gamma(s) \right]ds}{\gamma(t)}+ \dfrac{\gamma(t_1)E(t_1)}{\gamma(t)}
\end{equation}
  where $\gamma(t)=\exp\left(\displaystyle \int_{t_1}^{t} \mu(s)ds\right),$ and $ \mu(t)=-\dfrac{\dot{\varepsilon}(t)}{2\varepsilon(t)}+(\delta-\lambda)\sqrt{\varepsilon(t)}$.
\end{theorem}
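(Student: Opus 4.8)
The plan is to establish the differential inequality \eqref{est:basic2} for $E(t)$ and then integrate it using the integrating factor $\gamma(t)=\exp(\int_{t_1}^t\mu(s)ds)$. First I would differentiate $E(t)=\left(\varphi_t(x(t))-\varphi_t(x_{\varepsilon(t)})\right)+\frac12\|v(t)\|^2$. For the first bracket, write $\frac{d}{dt}\varphi_t(x(t))=\langle\nabla\varphi_t(x(t)),\dot x(t)\rangle+\frac{\dot\varepsilon(t)}{2}\|x(t)\|^2$, use the dynamic \eqref{1} to replace $\nabla\varphi_t(x(t))=\nabla f(x(t))+\varepsilon(t)x(t)=-\ddot x(t)-\delta\sqrt{\varepsilon(t)}\dot x(t)$, and subtract $\frac{d}{dt}\varphi_t(x_{\varepsilon(t)})=\frac12\dot\varepsilon(t)\|x_{\varepsilon(t)}\|^2$ from Lemma \ref{lem1}(i). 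For the second term, $\frac{d}{dt}\left(\frac12\|v\|^2\right)=\langle v,\dot v\rangle$ where $\dot v(t)=\dot\tau(t)(x(t)-x_{\varepsilon(t)})+\tau(t)(\dot x(t)-\frac{d}{dt}x_{\varepsilon(t)})+\ddot x(t)$; again substitute $\ddot x(t)$ from the dynamic. The natural choice, dictated by the strongly convex model (Theorem \ref{strong-conv-thm}), is $\tau(t)=\lambda\sqrt{\varepsilon(t)}$, so that the $\ddot x$ and the $\delta\sqrt{\varepsilon(t)}\dot x$ terms combine favorably.

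Next I would assemble these pieces. After substitution, $v(t)=\tau(t)(x(t)-x_{\varepsilon(t)})+\dot x(t)$ appears naturally, and the cross terms involving $\langle\nabla f(x(t))+\varepsilon(t)x(t),\;x(t)-x_{\varepsilon(t)}\rangle$ should be bounded below using strong convexity of $\varphi_t$: namely $\langle\nabla\varphi_t(x(t)),x(t)-x_{\varepsilon(t)}\rangle\ge\left(\varphi_t(x(t))-\varphi_t(x_{\varepsilon(t)})\right)+\frac{\varepsilon(t)}{2}\|x(t)-x_{\varepsilon(t)}\|^2$ (and one could even use the sharper $\varepsilon(t)\|x(t)-x_{\varepsilon(t)}\|^2$ lower bound). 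The terms $\langle v,\frac{d}{dt}x_{\varepsilon(t)}\rangle$ and similar are controlled by Cauchy--Schwarz together with the key estimate $\|\frac{d}{dt}x_{\varepsilon(t)}\|\le-\frac{\dot\varepsilon(t)}{\varepsilon(t)}\|x_{\varepsilon(t)}\|$ from Lemma \ref{lem1}(ii), and then $\|x_{\varepsilon(t)}\|\le\|x^*\|$ from \eqref{2a}; this is where the right-hand side term $\rho(t)\|x^*\|^2$ with $\rho(t)$ proportional to $\dot\varepsilon^2/\varepsilon^{3/2}$ and $-\dot\varepsilon$ is generated. The role of the parameters $a,c>1$ is to split these bad terms via Young's inequality: a factor $\frac{1}{c}$ appears when absorbing a quadratic term in $\|x(t)-x_{\varepsilon(t)}\|$ against the available $\frac{\varepsilon(t)}{2}\|x(t)-x_{\varepsilon(t)}\|^2$ coercivity, and a factor $\frac{a}{a+1}$ when splitting a term of the form $\langle\tau(x-x_{\varepsilon(t)})+\dot x,\cdot\rangle$ into a multiple of $\|v\|^2$ plus a remainder.

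The crux is the bookkeeping that shows the coefficient of $\left(\varphi_t(x(t))-\varphi_t(x_{\varepsilon(t)})\right)$ and the coefficient of $\|v(t)\|^2$ in $\dot E(t)$ are both $\le-\mu(t)$ with the stated $\mu(t)=-\frac{\dot\varepsilon(t)}{2\varepsilon(t)}+(\delta-\lambda)\sqrt{\varepsilon(t)}$, while the leftover coercive term $\frac{\varepsilon(t)}{2}\|x(t)-x_{\varepsilon(t)}\|^2$ has a nonnegative (or favorably signed) coefficient. The constraint $\frac{d}{dt}\left(\frac{1}{\sqrt{\varepsilon(t)}}\right)\le 2\lambda-\delta$, equivalently $-\frac{\dot\varepsilon}{2\varepsilon^{3/2}}\le 2\lambda-\delta$, is exactly what forces $\dot\tau(t)+\tau(t)^2/\lambda$-type quantities into the right range so that the $\|v\|^2$ coefficient works out; the second bound $\frac{d}{dt}\left(\frac{1}{\sqrt{\varepsilon(t)}}\right)\le\delta-\frac{a+1}{a}\lambda$ handles the $(\varphi_t(x(t))-\varphi_t(x_{\varepsilon(t)}))$ coefficient after the Young split by $a$. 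The admissible window for $\lambda$ (a quadratic inequality in $\lambda$ with discriminant $(\delta+\frac1c)^2-4$) is precisely the condition that both requirements are simultaneously satisfiable; I would verify the endpoints by checking the relevant quadratic $\lambda^2-(\delta+\frac1c)\lambda+1$ is negative. I expect the main obstacle to be organizing the many cross terms so that all the ``bad'' contributions are either absorbed into $-\mu(t)E(t)$ or collected cleanly into $\rho(t)\|x^*\|^2$ with $\rho(t)=\frac12\left((\lambda c+a)\lambda\frac{\dot\varepsilon^2}{\varepsilon^{3/2}}-\dot\varepsilon\right)$; once \eqref{est:basic2} holds in this form, multiplying by $\gamma(t)$, noting $\frac{d}{dt}(\gamma E)\le\gamma\rho\|x^*\|^2$, and integrating from $t_1$ to $t$ yields \eqref{Lyap-basic1} immediately.
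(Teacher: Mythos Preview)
Your overall strategy---differentiate $E$, substitute the dynamic, use strong convexity of $\varphi_t$, control the $\frac{d}{dt}x_{\varepsilon(t)}$ terms via Young's inequality and Lemma~\ref{lem1}, then integrate against $\gamma$---is the paper's. But the bookkeeping plan you describe would not close, for three concrete reasons. First, the organizing principle is \emph{not} to show that the coefficients of $(\varphi_t(x)-\varphi_t(x_\varepsilon))$ and of $\|v\|^2$ in $\dot E$ are each $\le-\mu$. After expanding $\langle\dot v,v\rangle$ and adding $\mu E$, one gets separate terms in $(\varphi_t(x)-\varphi_t(x_\varepsilon))$, $\|x-x_\varepsilon\|^2$, $\|\dot x\|^2$, and the cross term $\langle x-x_\varepsilon,\dot x\rangle$. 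The sign of the cross term is unknown, so $\mu$ is \emph{defined} precisely so that its coefficient $\dot\tau+\tau^2-\delta\tau\sqrt\varepsilon+\mu\tau$ vanishes; with $\tau=\lambda\sqrt\varepsilon$ this forces $\mu=-\frac{\dot\varepsilon}{2\varepsilon}+(\delta-\lambda)\sqrt\varepsilon$. One then checks that the three remaining coefficients (call them $A,B,C$) are $\le0$. Trying to keep $\|v\|^2$ intact would require the $\|x-x_\varepsilon\|^2$, $\|\dot x\|^2$, and cross-term coefficients to appear in the exact ratio $\tau^2:1:2\tau$, which they do not.

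Second, you have the two halves of $(\mathcal H_1)$ swapped: the bound $\frac{d}{dt}(1/\sqrt\varepsilon)\le2\lambda-\delta$ is what makes the coefficient $A$ of $(\varphi_t(x)-\varphi_t(x_\varepsilon))$ nonpositive, while $\frac{d}{dt}(1/\sqrt\varepsilon)\le\delta-\frac{a+1}{a}\lambda$ controls the coefficient $B$ of $\|\dot x\|^2$. The parameter $a$ enters through the Young split $-\tau\langle\frac{d}{dt}x_\varepsilon,\dot x\rangle\le\frac{\tau}{2a}\|\dot x\|^2+\frac{a\tau}{2}\|\frac{d}{dt}x_\varepsilon\|^2$, not through any decomposition into a multiple of $\|v\|^2$; the parameter $c$ enters via a second Young split $-\tau^2\langle\frac{d}{dt}x_\varepsilon,x-x_\varepsilon\rangle\le\frac{b\tau}{2}\|\frac{d}{dt}x_\varepsilon\|^2+\frac{\tau^3}{2b}\|x-x_\varepsilon\|^2$ with $b=c\lambda$. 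Third, your sign on the quadratic is inverted: the coefficient $C$ of $\|x-x_\varepsilon\|^2$ contains $(\delta+\frac1c)\lambda-\lambda^2-1$, and one needs this $\le0$, i.e.\ $\lambda^2-(\delta+\frac1c)\lambda+1\ge0$; for $\delta>2-\frac1c$ this places $\lambda$ \emph{above} the larger root, not between the roots.
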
 %
\begin{proof}
According to the classical derivation  chain rule and Lemma \ref{lem1} $i)$, the differentiation of the function $E(\cdot)$ gives
\begin{equation}\label{6}
\begin{array}{lll}
\dot{E}(t)&=  &  \langle \nabla\varphi_{t}(x(t)),\dot{x}(t)\rangle+\frac{1}{2}\dot{\varepsilon}(t)\|x(t)\|^{2}-\frac{1}{2}\dot{\varepsilon}(t)\|x_{\varepsilon(t)}\|^{2}+ \langle\dot{v}(t),v(t)\rangle.
\end{array} 
\end{equation}
Using the constitutive equation  \eqref{1}, we have $$ 
\begin{array}{lll}
\dot{v}(t)&=  & \dot{\tau}(t)\left(x(t)-x_{\varepsilon(t)}\right)+ \tau(t)\dot{x}(t)-\tau(t)\dfrac{d}{dt}x_{\varepsilon(t)}+\ddot{x}(t), \\ 
& = & \dot{\tau}(t)\left(x(t)-x_{\varepsilon(t)}\right)+ \tau(t)\dot{x}(t)-\tau(t)\dfrac{d}{dt}x_{\varepsilon(t)}-\delta\sqrt{\varepsilon(t)}\dot{x}(t)-\nabla\varphi_{t}(x(t))\\
& = &\dot{\tau}(t)\left(x(t)-x_{\varepsilon(t)}\right)+ \left(\tau(t)-\delta\sqrt{\varepsilon(t)}\right)\dot{x}(t)-\tau(t)\dfrac{d}{dt}x_{\varepsilon(t)}-\nabla\varphi_{t}(x(t)).
\end{array} $$
Therefore,
\begin{eqnarray}\label{7}
	\langle\dot{v}(t),v(t)\rangle&=&  \left\langle\dot{\tau}(t)\left(x(t)-x_{\varepsilon(t)}\right)+ \left(\tau(t)-\delta\sqrt{\varepsilon(t)}\right)\dot{x}(t),\tau(t)(x(t)-x_{\varepsilon(t)})+\dot{x}(t)\right\rangle  \nonumber \\ 
	&+&  \left\langle-\tau(t)\dfrac{d}{dt}x_{\varepsilon(t)}-\nabla\varphi_{t}(x(t)),\tau(t)(x(t)-x_{\varepsilon(t)})+\dot{x}(t)\right\rangle  \nonumber  \\ 
&=& \dot{\tau}(t) \tau(t)\|x(t)-x_{\varepsilon(t)}\|^{2}+\left(\dot{\tau}(t)+\tau^{2}(t)-\delta\tau(t)\sqrt{\varepsilon(t)}\right)\langle x(t)-x_{\varepsilon(t)},\dot{x}(t)\rangle \nonumber \\
	&+& \left(\tau(t)-\delta\sqrt{\varepsilon(t)}\right)\|\dot{x}(t)\|^{2}-\tau^{2}(t)\langle\dfrac{d}{dt} x_{\varepsilon(t)},x(t)-x_{\varepsilon(t)}\rangle-\tau(t)\langle\dfrac{d}{dt} x_{\varepsilon(t)},\dot{x}(t)\rangle  \nonumber\\
		& & \underbrace{-\tau(t)\langle\nabla\varphi_{t}(x(t)),x(t)-x_{\varepsilon(t)})\rangle}_{=A_0} -\langle\nabla\varphi_{t}(x(t)),\dot{x}(t)\rangle,
\end{eqnarray} 
Since $\varphi_{t}$ is strongly convex, we have
$$\varphi_{t}(x_{\varepsilon(t)})-\varphi_{t}(x(t))\geq \left\langle \nabla\varphi_{t}(x(t)),x_{\varepsilon(t)}-x(t) \right\rangle+\frac{\varepsilon(t)}{2} \| x(t)-x_{\varepsilon(t)}\|^{2}.$$
Using the above inequality, we get  
\begin{equation}\label{8}
A_0=-\tau (t)\langle\nabla\varphi_{t}(x(t)),x(t)-x_{\varepsilon(t)})\rangle	 \leq   -\tau(t)\left(\varphi_{t}(x(t))-\varphi_{t}(x_{\varepsilon(t)})\right)-\dfrac{\tau(t)\varepsilon(t)}{2}\| x(t)-x_{\varepsilon(t)}\|^{2}. 
\end{equation}
Moreover, we have for any $a>1,$ 
$$-\tau(t)\left\langle\dfrac{d}{dt} x_{\varepsilon(t)},\dot{x}(t)\right\rangle \leq\dfrac{\tau(t)}{2a}\|\dot{x}(t)\|^{2} +\dfrac{a\tau(t)}{2}\left\|\dfrac{d}{dt} x_{\varepsilon(t)}\right\|^{2}
$$
  and  for any $b>0$
  $$-\tau^{2}(t)\left\langle\dfrac{d}{dt} x_{\varepsilon(t)},x(t)-x_{\varepsilon(t)}\right\rangle \leq \dfrac{b\tau(t)}{2}\left\|\dfrac{d}{dt} x_{\varepsilon(t)}\right\|^{2} +\dfrac{\tau^{3}(t)}{2b}\|x(t)-x_{\varepsilon(t)}\|^{2}.$$
By combining the two above inequalities with \eqref{6}, \eqref{7} and \eqref{8}, and after reduction we obtain
\begin{eqnarray}\label{9}
\dot{E}(t)&\leq  & -\tau(t)\left(\varphi_{t}(x(t))-\varphi_{t}(x_{\varepsilon(t)})\right)+\frac{1}{2}\dot{\varepsilon}(t)\|x(t)\|^{2}-\frac{1}{2}\dot{\varepsilon}(t)\|x_{\varepsilon(t)}\|^{2} \nonumber \\
&& +\left[\dot{\tau}(t) \tau(t)-\dfrac{\tau(t)\varepsilon(t)}{2}\right]\|x(t)-x_{\varepsilon(t)}\|^{2} \nonumber\\ 
& &+\left(\dot{\tau}(t)+\tau^{2}(t)-\delta\tau(t)\sqrt{\varepsilon(t)}\right)\langle x(t)-x_{\varepsilon(t)},\dot{x}(t)\rangle+\left(\tau(t)-\delta\sqrt{\varepsilon(t)}\right)\|\dot{x}(t)\|^{2} \nonumber\\
&&-\tau^{2}(t)\langle\dfrac{d}{dt} x_{\varepsilon(t)},x(t)-x_{\varepsilon(t)} \rangle-\tau(t) \left\langle\dfrac{d}{dt} x_{\varepsilon(t)},\dot{x}(t)\right\rangle
\nonumber \\
&\leq &-\tau(t)\left(\varphi_{t}(x(t))-\varphi_{t}(x_{\varepsilon(t)})\right)+\frac{1}{2}\left[(b+a)\tau(t)\right] \left\|\dfrac{d}{dt}x_{\varepsilon(t)}\right\|^{2} \nonumber\\
&&+\frac{\dot{\varepsilon}(t)}{2}\|x(t)\|^{2} -\dfrac{\dot{\varepsilon}(t)}{2}\|x_{\varepsilon(t)}\|^{2}+\left((1+\frac{1}{2a})\tau(t)-\delta\sqrt{\varepsilon(t)}\right)\|\dot{x}(t)\|^{2}\nonumber \\
&&+\left[\dot{\tau}(t) \tau(t)-\dfrac{\tau(t)\varepsilon(t)}{2}+\dfrac{\tau^{3}(t)}{2b}\right]\|x(t)-x_{\varepsilon(t)}\|^{2} \nonumber \\
&&+\left(\dot{\tau}(t)+\tau^{2}(t)-\delta\tau(t)\sqrt{\varepsilon(t)}\right)\langle x(t)-x_{\varepsilon(t)},\dot{x}(t)\rangle . 
\end{eqnarray}
On the other hand, for a positive function $\mu(t),$ we have
\begin{equation}\label{10}
\begin{array}{lll}
\mu(t)E(t)& = & \mu(t)\left(\varphi_{t}(x(t))-\varphi_{t}(x_{\varepsilon(t)})\right) +\dfrac{\mu(t)}{2}\|v(t)\|^{2}  \\ 
& = & \mu(t)\left(\varphi_{t}(x(t))-\varphi_{t}(x_{\varepsilon(t)})\right) +\dfrac{\mu(t)\tau^{2}(t)}{2}\|x(t)-x_{\varepsilon(t)}\|^{2}+\dfrac{\mu(t)}{2}\|\dot{x}(t)\|^{2}\\
&&+\mu(t)\tau(t)\langle x(t)-x_{\varepsilon(t)}, \dot{x}(t) \rangle.
\end{array} 
\end{equation}
By adding \eqref{9} and \eqref{10}, we get
\begin{eqnarray}\label{11}
\dot{E}(t)+\mu(t)E(t)&\leq &\left(\mu(t)-\tau(t)\right)\left(\varphi_{t}(x(t))-\varphi_{t}(x_{\varepsilon(t)})\right)+\frac{1}{2}\left[(b+a)\tau(t)\right]\|\dfrac{d}{dt}x_{\varepsilon(t)}\|^{2} \nonumber \\
&&-\frac{1}{2}\dot{\varepsilon}(t)\|x_{\varepsilon(t)}\|^{2} +\frac{\dot{\varepsilon}(t)}{2}\|x(t)\|^{2} +\left((1+\frac{1}{2a})\tau(t)-\delta\sqrt{\varepsilon(t)}+\frac{\mu(t)}{2}\right)\|\dot{x}(t)\|^{2} \nonumber\\
&&+\left[\dot{\tau}(t) \tau(t)-\dfrac{\tau(t)\varepsilon(t)}{2}+\dfrac{\tau^{3}(t)}{2b}+\dfrac{\mu(t)\tau^{2}(t)}{2}\right]\|x(t)-x_{\varepsilon(t)}\|^{2} \nonumber\\
&&+\left(\dot{\tau}(t)+\tau^{2}(t)-\delta\tau(t)\sqrt{\varepsilon(t)}+\mu(t)\tau(t)\right)\langle x(t)-x_{\varepsilon(t)},\dot{x}(t)\rangle.
\end{eqnarray} 
As we do not know a priori the sign of $\langle x(t)-x_{\varepsilon(t)},\dot{x}(t)\rangle,$ we take the coefficient in front of this term equal to zero, which gives
\begin{equation}\label{12}
\dot{\tau}(t)+\tau^{2}(t)-\delta\tau(t)\sqrt{\varepsilon(t)}+\mu(t)\tau(t)=0.
\end{equation}
Let us make the following choice of the time dependent parameter
$\tau(t)$ introduced in \eqref{3b} (indeed, it is a key ingredient of our Lyapunov analysis)
$$\tau(t) = \lambda \sqrt{\varepsilon(t)},$$
where $\lambda$ is a positive parameter to be fixed.
Then,  the relation \eqref{12} can be equivalently written
$$ \mu(t)=-\dfrac{\dot{\varepsilon}(t)}{2\varepsilon(t)}+(\delta-\lambda)\sqrt{\varepsilon(t)}.$$
According to this choice for $\mu( t )$ and  $\tau ( t ),$ and neglecting the term $\frac{\dot{\varepsilon}(t)}{2}\|x(t)\|^{2}$ which is less than or equal to zero,  the inequality \eqref{11} becomes
\begin{equation}\label{13}
\begin{array}{lll}
\dot{E}(t)+\mu(t)E(t)&\leq &\dfrac{1}{2\varepsilon(t)}\left(-\dot{\varepsilon}(t)+2(\delta-2\lambda)\varepsilon^{\frac{3}{2}}(t)\right)\left(\varphi_{t}(x(t))-\varphi_{t}(x_{\varepsilon(t)})\right) \\
&&+\dfrac{1}{2}\left[(b+a)\lambda\sqrt{\varepsilon(t)}\right] \left\|\dfrac{d}{dt}x_{\varepsilon(t)}\right\|^{2}-\frac{1}{2}\dot{\varepsilon}(t)\|x_{\varepsilon(t)}\|^{2} \\
&&+\dfrac{1}{4\varepsilon(t)}\left[2\left((1+\frac{1}{a})\lambda-\delta\right)\varepsilon^{\frac{3}{2}}(t)-\dot{\varepsilon}(t)\right]\|\dot{x}(t)\|^{2}\\
&&+\dfrac{\lambda}{4}\left[\lambda\dot{\varepsilon}(t)+2\left(\delta \lambda-\lambda^{2}-1\right)\varepsilon^{\frac{3}{2}}(t)+2\frac{\lambda^{2}}{b}\varepsilon^{\frac{3}{2}}(t)\right]\|x(t)-x_{\varepsilon(t)}\|^{2}.
\end{array} 
\end{equation}
According to item $ii)$ of Lemma \ref{lem1}, and inequality
\eqref{2a}
\begin{center}
 $\left\|\dfrac{d}{dt}x_{\varepsilon(t)}\right\|^{2}\leq \dfrac{\dot{\varepsilon}^{2}(t)}{\varepsilon^{2}(t)}\|x_{\varepsilon(t)}\|^{2}\leq \dfrac{\dot{\varepsilon}^{2}(t)}{\varepsilon^{2}(t)}\|x^{*}\|^{2} .$
\end{center}
 Using again  inequality
\eqref{2a}, and the fact that $ -\dot{\varepsilon}(t) \geq 0$, we have 
\begin{center}
$
-\frac{1}{2}\dot{\varepsilon}(t)\|x_{\varepsilon(t)}\|^{2} \leq
-\frac{1}{2} \dot{\varepsilon}(t)  \|x^{*}\|^{2}.
$
\end{center}
By inserting the two inequalities above in \eqref{13}, we obtain
\begin{equation}\label{14a}
\begin{array}{lll}
\dot{E}(t)+\mu(t)E(t)&\leq &\dfrac{1}{2\varepsilon(t)}\left(-\dot{\varepsilon}(t)+2(\delta-2\lambda)\varepsilon^{\frac{3}{2}}(t)\right)\left(\varphi_{t}(x(t))-\varphi_{t}(x_{\varepsilon(t)})\right) \\
&&+\dfrac{1}{2}\left[(b+a)\lambda\dfrac{\dot{\varepsilon}^{2}(t)}{\varepsilon^{\frac{3}{2}}(t)}-\dot{\varepsilon}(t)\right]\|x^{*}\|^{2}\\
&&+\dfrac{1}{4\varepsilon(t)} \left[2\left((1+\frac{1}{a})\lambda-\delta\right)\varepsilon^{\frac{3}{2}}(t)-\dot{\varepsilon}(t)\right]\|\dot{x}(t)\|^{2}\\
&&+\dfrac{\lambda}{4} \left[\lambda\dot{\varepsilon}(t)+2\left(\delta \lambda-\lambda^{2}-1\right)\varepsilon^{\frac{3}{2}}(t)+2\frac{\lambda^{2}}{b}\varepsilon^{\frac{3}{2}}(t)\right] \|x(t)-x_{\varepsilon(t)}\|^{2}.
\end{array} 
\end{equation}
By taking $b=c\lambda,$ with $c>1,$ we get
\begin{equation}\label{14}
\begin{array}{lll}
\dot{E}(t)+\mu(t)E(t)&\leq &\dfrac{1}{2\varepsilon(t)}\underbrace{\left(-\dot{\varepsilon}(t)+2(\delta-2\lambda)\varepsilon^{\frac{3}{2}}(t)\right)}_{=A}\left(\varphi_{t}(x(t))-\varphi_{t}(x_{\varepsilon(t)})\right) \\
&&+\dfrac{1}{2}\left[(\lambda c+a)\lambda\dfrac{\dot{\varepsilon}^{2}(t)}{\varepsilon^{\frac{3}{2}}(t)}-\dot{\varepsilon}(t)\right]\|x^{*}\|^{2}\\
&&+\dfrac{1}{4\varepsilon(t)}\underbrace{\left[2\left((1+\frac{1}{a})\lambda-\delta\right)\varepsilon^{\frac{3}{2}}(t)-\dot{\varepsilon}(t)\right]}_{=B}\|\dot{x}(t)\|^{2}\\
&&+\dfrac{\lambda}{4}\underbrace{\left[\lambda\dot{\varepsilon}(t)+2\left((\delta+\frac{1}{c}) \lambda-\lambda^{2}-1\right)\varepsilon^{\frac{3}{2}}(t)\right]}_{=C}\|x(t)-x_{\varepsilon(t)}\|^{2} .
\end{array} 
\end{equation}
 We are looking for sufficient conditions on the parameters $\lambda$ and $\varepsilon (t)$ which make $A$, $B$, and $C$ less than or equal to zero.
It is here that the hypothesis $(\mathcal{H}_1)$ formulated in the statement of the Theorem \ref{strong-conv-thm-b} intervenes. It is recalled below for the convenience of the reader 
 
 \medskip

$(\mathcal{H}_1)$ \quad  There exists $a,c>1$  and $t_1 \geq t_0$ such that for all $t \geq t_1 ,$
$$\dfrac{d}{dt}\left(\dfrac{1}{\sqrt{\varepsilon}(t)}\right) \leq \min\left(2\lambda-\delta\; ,\; \delta-\frac{a+1}{a}\lambda\right),$$
where $\lambda$ is  such that
$\frac{\delta}{2}< \lambda <\frac{a}{a+1}\delta$ for $0 <\delta \leq 2-\frac{1}{c}$
and 
$\frac12\left(\delta+\frac{1}{c}+\sqrt{(\delta+\frac{1}{c})^{2}-4}\right) < \lambda <\frac{a}{a+1}\delta $
 for $\delta > 2-\frac{1}{c}.$

\bigskip

 
\medskip

\noindent Clearly,  condition 
 $(\mathcal{H}_1)$ is equivalent to
 $$\dfrac{d}{dt}\left(\dfrac{1}{\sqrt{\varepsilon}(t)}\right) \leq 2\lambda-\delta
 \quad  and \quad
\dfrac{d}{dt}\left(\dfrac{1}{\sqrt{\varepsilon}(t)}\right)\leq \delta-\frac{a+1}{a}\lambda.$$
\noindent Under condition   $(\mathcal{H}_1)$ we immediately obtain that $A$ and $B$ are less than or equal to zero:

\medskip

 	$\bullet$ \;  $A=-\dot{\varepsilon}(t)+2(\delta-2\lambda)\varepsilon^{\frac{3}{2}}(t)=2\varepsilon^{\frac{3}{2}}(t)\left[\dfrac{d}{dt}\left(\dfrac{1}{\sqrt{\varepsilon(t)}}\right)+\delta-2\lambda \right]\leq 0;$
 	
 	\medskip
 	
 	$\bullet$ \; $B=2\left((1+\frac{1}{a})\lambda-\delta\right)\varepsilon^{\frac{3}{2}}(t)-\dot{\varepsilon}(t)=
	2\varepsilon^{\frac{3}{2}}(t)\left[\dfrac{d}{dt}\left(\dfrac{1}{\sqrt{\varepsilon(t)}}\right) + \dfrac{
	a+1}{a}\lambda-\delta\right]\leq 0.$

 \medskip
 
  	$\bullet$ \; Let us now examine $C$.
  $$
 	\begin{array}{lll}
 	C&=&\lambda\dot{\varepsilon}(t)+2\left((\delta+\frac{1}{c}) \lambda-\lambda^{2}-1\right)\varepsilon^{\frac{3}{2}}(t)\\
 	&=&2\varepsilon^{\frac{3}{2}}(t)\left[-\lambda\underbrace{\dfrac{d}{dt}\left( \dfrac{1}{\sqrt{\varepsilon(t)}}\right)}_{\geq 0  }+ \left((\delta+\frac{1}{c}) \lambda-\lambda^{2}-1\right) \right].
 	\end{array}
 	$$
 	When $\delta \leq 2-\frac{1}{c}$ we have
$(\delta+\frac{1}{c})\lambda-\lambda^{2} -1 \leq 2\lambda-\lambda^{2} -1 \leq 0.$\\
 When $\delta > 2-\frac{1}{c},$ we have $(\delta+\frac{1}{c})\lambda-\lambda^{2}-1 \leq 0,$ because $\lambda\geq  \frac12\left(\delta+\frac{1}{c}+\sqrt{(\delta+\frac{1}{c})^{2}-4}\right)$.\\
This implies that $C\leq 0.$

\medskip 	
 	
\noindent According to \eqref{14}, under condition   $(\mathcal{H}_1)$ we conclude that
 \begin{equation}\label{15}
 \dot{E}(t)+\mu(t)E(t)\leq \frac{1}{2}\left[(\lambda c+a)\lambda\dfrac{\dot{\varepsilon}^{2}(t)}{\varepsilon^{\frac{3}{2}}(t)}-\dot{\varepsilon}(t)\right]\|x^{*}\|^{2}
 \end{equation}
  By multiplying the inequality above with $\gamma(t)=\exp\left(\displaystyle \int_{t_1}^{t} \mu(s)ds\right),$ we obtain
  \begin{equation}\label{16}
  \dfrac{d}{dt}\left(\gamma(t) E(t)\right)\leq \dfrac{\|x^{*}\|^{2}}{2}\left[(\lambda c+a)\lambda\dfrac{\dot{\varepsilon}^{2}(t)}{\varepsilon^{\frac{3}{2}}(t)}-\dot{\varepsilon}(t)\right] \gamma(t)
  \end{equation}
By integrating \eqref{16} on $[t_1 , t]$, we get

$$ E(t)\leq \dfrac{\|x^{*}\|^{2}}{2}\dfrac{\displaystyle \int_{t_1}^{t}\left[\left((\lambda c+a)\lambda\dfrac{\dot{\varepsilon}^{2}(s)}{\varepsilon^{\frac{3}{2}}(s)}-\dot{\varepsilon}(s)\right) \gamma(s) \right]ds}{\gamma(t)}+ \dfrac{\gamma(t_1)E(t_1)}{\gamma(t)}.
$$

\noindent This completes the proof of the Lyapunov analysis.
\end{proof}

\begin{remark}
Given $\delta>0$, the condition $(\mathcal{H}_1)$ gives the admissible values of the parameters $a>1, \; c>1$ and $\lambda >0$ which enter into the convergence rates obtained in Theorem \ref{strong-conv-thm-b}. 
Let us verify that the inequalities that define the values of these parameters are consistent.
We consider successively  the two cases $\delta < 2$, then $\delta \geq 2$.

\smallskip

a) When $\delta < 2$, we have $\delta < 2 -\frac{1}{c}$ for $c$ sufficiently large.
Because $a>1$,  we have $\frac{1}{2}<\frac{a}{a+1}$, and hence the interval $[\frac{1}{2}\delta, \frac{a}{a+1}\delta]$
is nonempty. Therefore, in this case the conditions are consistent.

\smallskip

b) Suppose now that  $\delta  \geq 2$. Then $\delta > 2-\frac{1}{c}$ for all $c>0$, and we can argue with $c$ arbitrarily large.
 Let us verify that we can find   $a$ and $c$ such that 
\begin{equation}\label{parameters1}
\frac{\delta+\frac{1}{c}+\sqrt{(\delta+\frac{1}{c})^{2}-4}}{2\;} <\frac{a}{a+1}\delta,
\end{equation}
and hence a value of $\delta$ belonging to the corresponding interval.
By letting $c \to +\infty$ and $a \to +\infty$ in the above inequality we obtain
$$
\delta +\sqrt{\delta^2-4} < 2\delta ,
$$
which is equivalent to $\sqrt{\delta^2-4} < \delta$, and hence is satisfied.
By a continuity argument, we obtain  that the inequality 
\eqref{parameters1} is satisfied by taking $a$ and $c$ sufficiently large.\\
Note that, since we are interested in asymptotic results the important point is to get the existence of parameters for which the Lyapunov analysis is valid. If we are interested in complexity results then the precise value of these parameters is important.
\end{remark} 

\begin{remark}
The above argument shows that the controlled decay property 
${\rm(CD)}_{\lambda}$
used in \cite{AL} corresponds to the limiting case
$c \to +\infty$ and $a \to +\infty$ in our condition 
$(\mathcal{H}_1)$.
\end{remark}

\begin{remark}
As in \cite{AL}, our Lyapunov analysis is valid for an arbitrary choice of the parameter $\delta$.
It would be interesting to know what is the best choice for $\delta$.

\end{remark}

\section{Particular cases}\label{sec: particular}
Let's study the case $\varepsilon(t)=\dfrac{1}{t^p}$,
and discuss, according to the value of the parameter $0<p<2$, the convergence rate of  values, and the convergence rate to zero of $\|x(t)-x_{\epsilon(t)}\|$.
The following results were stated in Theorem \ref{thm:model-a}, in the introduction, as model results. We reproduce them here for the convenience of the reader. The point is simply to apply the general Theorem \ref{strong-conv-thm-b} to this particular situation, and to show that the different quantities involved in the convergence results can be computed explicitly.

\begin{theorem}\label{thm:model-aa}
Take $\e(t)=\displaystyle\frac{1}{t^{p} } $,\;   $0<p<2$.
Let $x : [t_0, +\infty[ \to \mathcal{H}$ be a solution trajectory of
$$
\ddot{x}(t) + \frac{\d}{ \displaystyle{t^{\frac{p}{2}}}}\dot{x}(t) + \nabla f\left(x(t) \right)+ \frac{1}{t^p} x(t)=0.
$$
Then, we have  convergence of the values, and strong convergence to the minimum norm solution with the following rates:
\begin{eqnarray}
&& 
f(x(t))-\min_{\cH} f= \mathcal O \left( \displaystyle\frac{1}{t^{p} }   \right) \mbox{ as } \; t \to +\infty;\\
&& \|x(t) -x_{\varepsilon(t)}\|^2=\mathcal{O}\left(\dfrac{1}{ t^{\frac{2-p}2}}\right) \mbox{ as } \; t \to +\infty.
\end{eqnarray}
\end{theorem}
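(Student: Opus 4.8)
The plan is to instantiate the general Theorem~\ref{strong-conv-thm-b} with $\varepsilon(t)=1/t^p$, $0<p<2$, and then carry out the (elementary but somewhat delicate) asymptotic bookkeeping on the right-hand side of the energy estimate \eqref{Lyap-basic1}. First I would check that $(\mathcal{H}_1)$ holds: since $\frac{1}{\sqrt{\varepsilon(t)}}=t^{p/2}$, one has $\frac{d}{dt}\bigl(\frac{1}{\sqrt{\varepsilon(t)}}\bigr)=\frac{p}{2}\,t^{p/2-1}$, which tends to $0$ as $t\to+\infty$ precisely because $p<2$. Given $\delta>0$, the Remark following Theorem~\ref{strong-conv-thm-b} supplies admissible parameters $a,c>1$ and $\lambda$ (with $\delta-\lambda>0$) for which the quantity $\min\bigl(2\lambda-\delta,\ \delta-\tfrac{a+1}{a}\lambda\bigr)$ is a strictly positive constant; hence the differential inequality in $(\mathcal{H}_1)$ holds for all $t\geq t_1$ once $t_1$ is large enough, and Theorem~\ref{strong-conv-thm-b} applies, giving the bound \eqref{Lyap-basic1} on $E(t)$.

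Next I would make that bound explicit. With $\dot{\varepsilon}(t)=-p\,t^{-p-1}$ one gets $\mu(t)=-\frac{\dot{\varepsilon}(t)}{2\varepsilon(t)}+(\delta-\lambda)\sqrt{\varepsilon(t)}=\frac{p}{2t}+(\delta-\lambda)t^{-p/2}$, and therefore (using $1-\frac{p}{2}>0$) $\gamma(t)=\bigl(\tfrac{t}{t_1}\bigr)^{p/2}\exp\!\bigl(\tfrac{\delta-\lambda}{1-p/2}(t^{1-p/2}-t_1^{1-p/2})\bigr)$, which grows faster than any power of $t$; in particular the term $\gamma(t_1)E(t_1)/\gamma(t)$ in \eqref{Lyap-basic1} is negligible. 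The integrand factor is $g(t):=(\lambda c+a)\lambda\frac{\dot{\varepsilon}^{2}(t)}{\varepsilon^{3/2}(t)}-\dot{\varepsilon}(t)=(\lambda c+a)\lambda p^{2}\,t^{-p/2-2}+p\,t^{-p-1}$, and since $-p-1>-\frac{p}{2}-2$ for $p<2$, it behaves like $p\,t^{-p-1}$ as $t\to+\infty$.

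The core of the argument is to show $\frac{1}{\gamma(t)}\int_{t_1}^{t}g(s)\gamma(s)\,ds=\mathcal O(t^{-p/2-1})$. I would do this by the comparison $\frac{d}{dt}\bigl(t^{-p/2-1}\gamma(t)\bigr)=t^{-p/2-1}\gamma(t)\bigl(\mu(t)-\tfrac{p/2+1}{t}\bigr)=t^{-p/2-1}\gamma(t)\bigl((\delta-\lambda)t^{-p/2}-\tfrac{1}{t}\bigr)\geq\tfrac12(\delta-\lambda)\,t^{-p-1}\gamma(t)$ for $t$ large (again using $p<2$, so that $t^{-p/2}$ dominates $t^{-1}$); since $g(t)\gamma(t)\leq C\,t^{-p-1}\gamma(t)$, this gives $g(t)\gamma(t)\leq C'\frac{d}{dt}\bigl(t^{-p/2-1}\gamma(t)\bigr)$ for $t$ large, and integrating and dividing by $\gamma(t)$ (with $1/\gamma(t)=o(t^{-p/2-1})$) yields the claim; equivalently, since $\gamma'=\mu\gamma$, l'H\^opital gives the ratio $\sim g(t)/\mu(t)\sim\frac{p}{\delta-\lambda}\,t^{-p/2-1}$. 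Either way one concludes $E(t)=\mathcal O(t^{-p/2-1})$. I expect this asymptotic comparison to be the main obstacle, because $\gamma$ grows super-polynomially and one must extract exactly the right polynomial prefactor.

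Finally I would feed $E(t)=\mathcal O(t^{-p/2-1})$ into Lemma~\ref{lem-basic}: inequality \eqref{keybb} gives $f(x(t))-\min_{\cH}f\leq E(t)+\frac{\varepsilon(t)}{2}\|x^{*}\|^{2}=\mathcal O(t^{-p/2-1})+\mathcal O(t^{-p})=\mathcal O(t^{-p})$, the last step because $p<1+\frac{p}{2}$ whenever $p<2$; and \eqref{est:basic1} gives $\|x(t)-x_{\varepsilon(t)}\|^{2}\leq\frac{2E(t)}{\varepsilon(t)}=2t^{p}E(t)=\mathcal O(t^{p/2-1})=\mathcal O(t^{-(2-p)/2})$. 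Combining the latter with the classical strong convergence $\|x_{\varepsilon(t)}-x^{*}\|\to0$ from \eqref{2b} then yields $x(t)\to x^{*}$ strongly, which completes the proof.
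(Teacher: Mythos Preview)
Your proposal is correct and follows essentially the same route as the paper: verify $(\mathcal{H}_1)$ using $p<2$, compute $\mu$ and $\gamma$ explicitly, and then bound the integral in \eqref{Lyap-basic1} by comparing the integrand $g(s)\gamma(s)$ with the derivative of $s^{-p/2-1}\gamma(s)$ (the paper writes this same comparison function as $\frac{1}{\rho s}\exp(\delta_0 s^{(2-p)/2})$, which differs from yours only by a constant), yielding $E(t)=\mathcal O(t^{-(p+2)/2})$ and hence the stated rates via Lemma~\ref{lem-basic}. Your l'H\^opital remark is a legitimate shortcut not mentioned in the paper, but otherwise the arguments coincide.
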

 \begin{proof}
a) \textbf{Convergence rate of the values}:
  With the notations of  Theorem \ref{strong-conv-thm-b},
we have
 $$
 \mu(t)=-\dfrac{\dot{\varepsilon}(t)}{2\varepsilon(t)}+(\delta-\lambda)\sqrt{\varepsilon(t)}=\dfrac{p}{2t}+ \dfrac{\delta -\lambda}{t^{\frac{p}2}}.
 $$
So 
\begin{eqnarray}
\gamma(t)&=&\exp\left(\displaystyle \int_{t_1}^{t} \mu(s)ds\right) =  \left(\dfrac{t}{t_1}\right)^{\frac p2} \exp\left[\frac{2(\delta-\lambda)}{2-p}\left(t^{\frac{2-p}2} - t_1^{\frac{2-p}2} \right)\right] \nonumber\\
& =& 
  C_1t^{\frac p2} \exp\left[\frac{2(\delta-\lambda)}{2-p}t^{\frac{2-p}2} \right] \label{gamma-est-1}
\end{eqnarray} 
 where
 $ 
 C_1=\left(t_1^{\frac p2} \exp\left[\frac{2(\delta-\lambda)}{2-p}t_1^{\frac{2-p}2} \right]\right)^{-1}.
 $
Let us choose the parameters $a, c>1$, 
$\lambda>0$ such that
\begin{center}
$\frac{\delta}{2}< \lambda <\frac{a}{a+1}\delta$ for $ 0 <\delta \leq 2-\frac{1}{c}$ and $\frac{\delta}{2} < \frac12\left(\delta+\frac{1}{c}+\sqrt{(\delta+\frac{1}{c})^{2}-4}\right) < \lambda <\frac{a}{a+1}\delta $ for $\delta > 2-\frac{1}{c}$.
\end{center}
Notice that, in these two cases, we have  $\frac{1}2\delta<\lambda < \frac{a+1}{a}\delta$. Therefore,  $\min\left(2\lambda-\delta\; , \;  \delta-\frac{a+1}{a}\lambda\right)>0$. 
On the other hand, since $p<2$, we have
 $\lim_{t \to +\infty} t^{\frac{p-2}{2}}=0$. So we have, for  $t\geq t_1$ large enough,  
$$
\dfrac{d}{dt}\left(\dfrac{1}{\sqrt{\varepsilon(t)}}\right) =\frac{p}2t^{\frac{p-2}{2}}
\leq \min\left(2\lambda-\delta\; , \;  \delta-\frac{a+1}{a}\lambda\right).
$$
As a consequence, the condition $(\mathcal{H}_1)$ is satisfied.

\smallskip

\noindent According to \eqref{Lyap-basic1}, we have $E(t) \leq E_1(t) \|x^{*}\|^{2} + E_2(t)$ where 
\begin{equation}\label{Lyap-basic1-b}
  E_1(t)=\dfrac{1}{2\gamma(t)}\displaystyle{ \int_{t_1}^{t}\left[\left((\lambda c+a)\lambda\dfrac{\dot{\varepsilon}^{2}(s)}{\varepsilon^{\frac{3}{2}}(s)}-\dot{\varepsilon}(s)\right) \gamma(s) \right]ds}
\end{equation}
and
\begin{equation}\label{Lyap-basic1-c}
  E_2(t)=\dfrac{\gamma(t_1)E(t_1)}{\gamma(t)}.
\end{equation}
According to \eqref{gamma-est-1} we have 
$$
E_2(t) \leq C t^{-\frac p2} \exp\left[-\frac{2(\delta-\lambda)}{2-p}t^{\frac{2-p}2} \right].
$$
Since $0 <p<2$ and $\delta > \lambda$, we have that $E_2(t)$ tends to zero at an exponential rate, as $t \to +\infty$.\\
Thus, we only have to focus on the asymptotic behavior of $E_1 (t)$. Let us simplify the formula
by setting $\lambda_0:=(\lambda c+a)\lambda , \; \delta_0:=\dfrac{2(\delta-\lambda)}{2-p}$ in \eqref{Lyap-basic1}. Replacing $\varepsilon (t)$ and $\gamma(t)$ by their values in
\eqref{Lyap-basic1-b} gives 
 $$
E_1(t) = \dfrac{p}{2 t^{\frac p2} \exp\left(\delta_0t^{\frac{2-p}2} \right)}\displaystyle     \int_{t_1}^{t}\left( \dfrac{\lambda_0p}{s^2 } + \frac{1}{s^{\frac{p+2}2}}\right)\exp\left(\delta_0s^{\frac{2-p}2}\right)ds.
 $$
To compute the above integral, we notice that
$$
\dfrac{d}{ds}\left( \dfrac{1}{\rho s  } \exp\left(\delta_0 s^{\frac{2-p}2}\right)\right)= \left(  -\dfrac{1}{\rho s^2 } + \dfrac{\delta_0(2-p)}{2\rho s^{\frac{p+2}2}}   
\right)\exp\left(\delta_0s^{\frac{2-p}2}\right).
$$
Then, note that, when we set $\rho>0$ such that $\rho <\min \left( 1\;,\; \frac1{a+1}\delta\right)$, we obtain
$$
\begin{array}{lll}
 \dfrac{\lambda_0p}{s^2 } + \dfrac{1}{s^{\frac{p+2}2}} \leq -\dfrac{1}{\rho s^2 } + \dfrac{\delta_0(2-p)}{2\rho s^{\frac{p+2}2}}   
 & \Longleftrightarrow &
 \dfrac{\lambda_0p+\frac1{\rho}}{s^2 }  \leq \left(\dfrac{\delta_0(2-p)}{2\rho}-1\right) \dfrac1{s^{\frac{p+2}2}}
= \dfrac{\frac{\delta-\lambda}{\rho}-1}{s^{\frac{p+2}2}}\\
 & \Longleftrightarrow &
 \dfrac{\lambda_0p+\frac1{\rho}}{s^{\frac{2-p}2} }  \leq \dfrac{\delta-\lambda}{\rho}-1.
 \end{array}
 $$
Let us verify that the last above inequality is satisfied for $s$ large enough. First,  since $0<p<2$, we have   $\frac{2-p}2>0$,
and hence  $\lim_{s \to +\infty} \dfrac{1}{s^{\frac{2-p}2} } =0$. On the other hand, according to $(\mathcal{H}_1)$ we have
$\lambda < \frac{a}{a+1}\delta$. This property combined with the choice of $\rho$ implies
$$
\delta-\rho > \delta - \frac1{a+1}\delta =  \frac{a}{a+1}\delta >  \lambda.
$$
Therefore  $\dfrac{\delta-\lambda}{\rho}-1 >0$.
So,    for $t_1$ large enough
$$
\begin{array}{lll}
E_1(t) &\leq& \dfrac{p}{2 t^{\frac p2} \exp\left(\delta_0t^{\frac{2-p}2} \right)}\displaystyle \int_{t_1}^{t}\left(  -\dfrac{1}{\rho s^2 } + \dfrac{\delta_0(2-p)}{2\rho s^{\frac{p+2}2}}   
\right)\exp\left(\delta_0s^{\frac{2-p}2}\right)ds\\
&=& \dfrac{1}{2  t^{\frac p2} \exp\left(\delta_0t^{\frac{2-p}2} \right)}\displaystyle\int_{t_1}^{t}\dfrac{d}{ds}\left( \dfrac{1}{\rho s } \exp\left(\delta_0s^{\frac{2-p}2}\right)\right)ds\\
&=& \dfrac{p}{ 2\rho t^{\frac{p+2}2}} - \dfrac{p}{ t^{\frac{p}2}\exp\left(\delta_0t^{\frac{2-p}2} \right)}  \dfrac{1}{2\rho t_1 } \exp\left(\delta_0t_1^{\frac{2-p}2}\right)
\leq \dfrac{p}{2\rho t^{\frac{p+2}2}}.
 \end{array}
 $$
Since $E_2 (t)$ has an exponential decay to zero, we deduce that there exists a positive constant $C$ such that for $t$ large enough
$$
E(t) \leq \dfrac{C}{ t^{\frac{p+2}{2}}}.
$$
According to Lemma \ref{lem-basic}, we get
$$
f(x(t))-\min_{\mathcal H}f \leq C \left(\dfrac{1}{ t^{\frac{p+2}{2}}}  + \frac{1}{t^p} \right).
$$
Since $0<p<2,$ we have $p < \frac{p+2}{2}$.  We conclude that
$$
f(x(t))-\min_{\cH} f=\mathcal O \left( \displaystyle{ \frac{1}{t^{p}} }   \right) \mbox{ as } \; t \to +\infty.
$$

b) \textbf{Convergence rate to zero of $\|x(t)-x^*\|$}. 
According to Lemma \ref{lem-basic}, we have
\begin{equation}\label{est:basic1b}
 \|x(t) - x_{\varepsilon(t)}\|^2  \leq \frac{2E(t)}{\varepsilon(t)},
 \end{equation}
and since, for $t$ large enough,
$
E(t) \leq \dfrac{C}{ t^{\frac{p+2}{2}}}
$, we obtain
 $$
  \|x(t)-x_{\varepsilon(t)}\|^{2}=\mathcal{O}\left(\dfrac{1}{ t^{\frac{2-p}2}}\right),
  $$
which completes the proof.
\end{proof}

\begin{remark}
The convergence rate of the values 
$\mathcal O \left( \displaystyle{ \frac{1}{t^{p}} }   \right) $ obtained in Theorem 
\ref{thm:model-aa} notably improves the result obtained in 
\cite{AL}, where the convergence rate was of order
$\mathcal O \left (  \dfrac{1}{ t^{\frac{3p -2}{2}}}  \right)$.
Indeed, for $p<2$ we  have $p > \frac{3p -2}{2} $.
In addition, we have obtained that for any $0<p<2$, for any trajectory of (TRIGS), we have strong convergence of the trajectory to the minimum norm solution, as time $t$ tends toward $+\infty$. In \cite{AL} it was only obtained 
$\liminf_{t\to +\infty} \|x(t)-x^*\| =0$. 
\end{remark}

\begin{remark}
A close look at the proof of Theorem \ref{thm:model-aa} shows that the convergence rate of the values is still valid in the case $p=2$. Precisely, by taking $\varepsilon(t)= \frac{c}{t^2}$
with $c$ sufficiently small, we have that the condition 
$( \mathcal{H}_1)$ is satisfied, and hence 
$$f(x(t))-\min_{\cH} f= \mathcal O \left( \displaystyle\frac{1}{t^{2} }   \right) \mbox{ as } \; t \to +\infty.$$
\end{remark}

\begin{remark}
As a key ingredient of our proof of the  strong convergence of the trajectories of (TRIGS) to the element of minimum norm of $S$, we use the function
$
h(t):= \demi \|x(t) - x_{\varepsilon(t)}\|^2,
$
and show that $\lim_{t\to +\infty} h(t)=0$.
This strategy which consists in showing that the trajectory is not too far from the viscosity curve $\varepsilon \mapsto x_\varepsilon $  was already present in the approach developed by Attouch and Cominetti in \cite{AttCom} for the study of similar questions in the case of the steepest descent method.
\end{remark}

\subsection{Trade-off between the convergence rate of values and  trajectories}
The following elementary diagram shows  the respective evolution  as $p$ varies of the convergence rate of the values, and the convergence rate of $\|x(t)-x_{\varepsilon(t)}\|^{2}$.

\setlength{\unitlength}{12cm}
\begin{picture}(0.7,0.7)(-0.35, 0.0)
\put(0.55,0.26){$p$}
\put(0.03,0.26){0}
\put(0.345,0.26){2}
\put(0.16,0.26){$\frac{2}{3}$}
\put(0.03,0.585){2}

\put(0.038,0.39){$\frac{2}{3}$}

\multiput(0.065,0.6)(0.02,0){15}
{\line(1,0){0.01}}

\multiput(0.065,0.4)(0.02,0){5}
{\line(1,0){0.01}}

\multiput(0.36,0.3)(0,0.021){14}
{\line(0,1){0.01}}

\multiput(0.165,0.3)(0,0.021){5}
{\line(0,1){0.01}}

\put(-0.01,0.3){\vector(1,0){0.6}}
\put(0.063,0.23){\vector(0,1){0.42}}
\linethickness{0.4mm}
\tcr{
\put(0.05,0.3){\line(1,1){0.3}}
}
\tcb{
\put(0.04,0.453){\line(4,-2){0.30}}
}
\put(-0.25,0.16){$ f (x(t))-\min_{\mathcal H} f = \mathcal O \left( \displaystyle{ \frac{1}{t^{p}} } \right)$ (red color),
\quad $
  \|x(t)-x_{\varepsilon(t)}\|^{2}=\mathcal{O}\left(\dfrac{1}{ t^{\frac{2-p}2}}\right)
  $(blue color)} 
\end{picture}

We observe that $p=\frac{2}{3}$ is a good compromise between these two antagonist properties. Let us state the corresponding result  below.
\begin{corollary}\label{thm:model-b}
Take $\e(t)=\displaystyle{\frac{1}{t^{\frac{2}{3}}}} $, $\delta >0$.
Let $x : [t_0, +\infty[ \to \mathcal{H}$ be a  solution trajectory of
$$
\ddot{x}(t) + \displaystyle{ \frac{\d}{t^{\frac{1}{3}}}}\dot{x}(t) + \nabla f\left(x(t) \right)+ \displaystyle{\frac{1}{t^{\frac{2}{3}}} }x(t)=0.
$$
Then, we have  convergence of the values, and strong convergence to the minimum norm solution with the following rates:
$$
f(x(t))-\min_{\cH} f= \mathcal O \left( \displaystyle{ \frac{1}{t^{\frac{2}{3}}}  }\right) \mbox{ and } \;\;
\| x(t) - x_{\varepsilon (t)} \|^2 = \mathcal O \left( \displaystyle{ \frac{1}{t^{\frac{2}{3}}} }  \right) \mbox{ as } \; t \to +\infty.
$$
\end{corollary}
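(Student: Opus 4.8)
The plan is to obtain Corollary \ref{thm:model-b} as the instantiation of Theorem \ref{thm:model-aa} at the exponent $p = \frac{2}{3}$. First I would check that the hypotheses of Theorem \ref{thm:model-aa} are met: since $0 < \frac{2}{3} < 2$, the standing assumption on $p$ holds, and for every $\delta > 0$ one can select admissible parameters $a, c > 1$ and $\lambda$ satisfying $\frac{\delta}{2} < \lambda < \frac{a}{a+1}\delta$ (with the stronger lower bound $\frac12\big(\delta + \frac1c + \sqrt{(\delta + \frac1c)^2 - 4}\big) < \lambda$ in the range $\delta > 2 - \frac1c$), exactly as in the consistency discussion following Theorem \ref{strong-conv-thm-b}. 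Because $p < 2$ we have $\frac{d}{dt}\big(1/\sqrt{\varepsilon(t)}\big) = \frac{p}{2}\, t^{(p-2)/2} \to 0$, so for $t_1$ large enough this derivative lies below the positive quantity $\min\!\big(2\lambda - \delta,\ \delta - \frac{a+1}{a}\lambda\big)$; hence $(\mathcal{H}_1)$ is satisfied and Theorem \ref{thm:model-aa} applies.

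The two rates then follow by substitution. Theorem \ref{thm:model-aa} gives $f(x(t)) - \min_{\cH} f = \mathcal{O}(1/t^{p})$, which for $p = \frac{2}{3}$ reads $\mathcal{O}(1/t^{2/3})$. It also gives $\|x(t) - x_{\varepsilon(t)}\|^2 = \mathcal{O}\big(1/t^{(2-p)/2}\big)$; plugging in $p = \frac{2}{3}$ produces the exponent $\frac{2 - 2/3}{2} = \frac{2}{3}$, so $\|x(t) - x_{\varepsilon(t)}\|^2 = \mathcal{O}(1/t^{2/3})$. Finally, since $\varepsilon(t) \to 0$, the classical Tikhonov property \eqref{2b} yields $x_{\varepsilon(t)} \to x^{*}$, and combining this with the estimate on $\|x(t) - x_{\varepsilon(t)}\|$ shows that $x(t)$ converges strongly to the minimum norm solution $x^{*}$.

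There is essentially no real obstacle here; the only points that deserve a line of care are the nonemptiness of the admissible interval for $\lambda$ when $\delta \geq 2$ — which is handled, as in the remark after Theorem \ref{strong-conv-thm-b}, by letting $a$ and $c$ tend to $+\infty$ so that the constraint reduces to $\sqrt{\delta^2 - 4} < \delta$ — and the choice of a sufficiently large $t_1$ so that $(\mathcal{H}_1)$ holds. Both are consequences of material already established in the preceding sections, so the corollary is simply the specialization of the general result at the value $p = \frac{2}{3}$, which the diagram above singles out as the crossing point of the two rate curves.
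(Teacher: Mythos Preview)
Your proposal is correct and matches the paper's approach: the corollary is stated immediately after Theorem \ref{thm:model-aa} without a separate proof, as it is simply the specialization of that theorem to $p=\tfrac{2}{3}$. If anything, you are slightly more careful than necessary, since the verification of $(\mathcal{H}_1)$ and the admissibility of $\lambda$ are already absorbed into the proof of Theorem \ref{thm:model-aa}; once that theorem is invoked for $0<p<2$, only the arithmetic $\tfrac{2-p}{2}\big|_{p=2/3}=\tfrac{2}{3}$ remains.
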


\smallskip

b) Another interesting case is to take $p<2$, with $p$ close to $2$. In this case, we have a convergence rate of the values which is arbitrarily close to the convergence rate of the accelerated gradient method of Nesterov, with a guarantee of strong convergence towards the minimum norm solution. 
The case $p=2$  has been studied extensively in \cite{AL}.
The strong convergence of the trajectories to the minimum norm solution is an open question in the case $p=2$.

\smallskip
 
c) Estimating the convergence rate of $x(t)$ to $x^*$
 relies on getting informations about  the viscosity trajectory $\epsilon \mapsto x_{\varepsilon}$, and how fast $x_{\varepsilon}$ converges to $x^*$ as $\varepsilon \to 0$.
This is a difficult problem because the viscosity trajectory  can have an infinite length, as  Torralba showed in  \cite{Torralba}.
His counterexample involves the construction of a convex function from its sub-level sets, and relates to a convex function whose sub-level sets vary greatly. Our analysis focuses on general $ f $, \ie the worst case. This suggests that, under good geometric properties of $ f $, such as the Kurdyka-Lojasiewicz property, one should be able to obtain better results; see also \cite{AttCom} where it is shown the importance of this kind of question for the coupling of the steepest descent with Tikhonov approximation.

\section{Non-smooth case}\label{non-smooth}
Let us extend the  previous results to the case of a proper lower semicontinuous and convex function $f: \cH \to \R \cup \left\lbrace +\infty \right\rbrace$.
We rely on the basic properties of the Moreau envelope. Recall that, for any $\theta >0$, $f_{\theta}: \cH \to \R$  is defined by 
\begin{equation}\label{def:prox}
f_{\theta} (x) = \min_{\xi \in \cH} \left\lbrace f (\xi) + \frac{1}{2 \theta} \| x - \xi\| ^2   \right\rbrace, \quad \text{for any $x\in \cH$.} 
\end{equation}
Then  $f_{\theta} $ is a convex differentiable function, whose gradient is $\theta^{-1}$-Lipschitz continuous, and such that $\min_{\cH} f= \min_{\cH} f_{\theta}$, \; $\argmin_{\cH} f_{\theta} = \argmin_{\cH} f$.
Denoting by $\prox_{\theta f}(x)$ the unique point where the minimum value is achieved in \eqref{def:prox},
let us recall the following classical formulae:

\smallskip

\begin{enumerate}
\item $f_{\theta} (x)= f(\prox_{ \theta f}(x)) + \frac{1}{2\theta} \|x-\prox_{\theta f}(x)\| ^2$. \vspace{2mm}
\item $\nabla f_{\theta} (x)= \frac{1}{\theta} \left( x-\prox_{ \theta f}(x) \right)$. 
\end{enumerate}

\smallskip

The interested reader may refer to \cite{BC,Bre1} for a comprehensive treatment of the Moreau envelope in a Hilbert setting. Since the set of minimizers is preserved by taking the Moreau envelope, the idea is to replace $f$ by $f_{\theta} $ in the inertial dynamic (TRIGS).  Then, (TRIGS) applied to $f_{\theta}$ now reads
\begin{equation*}
\mbox{(TRIGS)}_{\theta} \quad \ddot{x}(t) + \d\sqrt{\e(t)}  \dot{x}(t) + \nabla f_{\theta} (x(t)) + \varepsilon (t) x(t) =0.
\end{equation*}
Clearly, since  $f_{\theta} $ is continuously differentiable, the hypothesis $( \mathcal{H}_0)$ are satisfied by the above dynamic.
By applying Theorem \ref{thm:model-aa} to  $\mbox{(TRIGS)}_{\theta}$, we get the following result, which provides convergence rate of the values and strong convergence to the minimum norm solution.

\begin{theorem} 
Let $f: \cH \to \R \cup \left\lbrace +\infty \right\rbrace$ be a convex, lower semicontinuous, proper function. 
Take $\e(t)=1/t^p$,\;   $0<p<2$, and $\theta >0$.
Let $x : [t_0, +\infty[ \to \mathcal{H}$ be a solution trajectory of $\mbox{\rm(TRIGS)}_{\theta}$, \ie
$$
\ddot{x}(t) +  \frac{\d}{t^{\frac{p}{2}}}\dot{x}(t) + \nabla f_{\theta}\left(x(t) \right)+ \frac{1}{t^p} x(t)=0.
$$
Then, we have the following  convergence rates: as $t \to +\infty$
\begin{eqnarray}
&& f({\prox}_{\theta f}(x(t)))-\min_{\cH}f= \mathcal O \left( \displaystyle{ \frac{1}{t^{p}} } \right);\\
&&   \|x(t) - {\prox}_{ \theta f}(x(t)))\|^2  =\mathcal O \left( \displaystyle{ \frac{1}{t^{p} }  } \right);      \\
&& \|x(t) -x_{\varepsilon(t)}\|^2=\mathcal{O}\left(\dfrac{1}{ t^{\frac{2-p}2}}\right).
\end{eqnarray}
\end{theorem}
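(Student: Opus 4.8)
The plan is to reduce this theorem to the smooth case already established in Theorem~\ref{thm:model-aa}, applied to the Moreau envelope $f_\theta$ in place of $f$. The key observation is that $f_\theta$ is convex, of class $\mathcal C^1$, with $\theta^{-1}$-Lipschitz gradient, so hypothesis $(\mathcal H_0)$ holds for the dynamic $\mbox{(TRIGS)}_\theta$; moreover $\argmin_{\cH} f_\theta = \argmin_{\cH} f =: S$ and $\min_{\cH} f_\theta = \min_{\cH} f$, so the minimum norm element $x^*$ of $S$ is \emph{the same} object whether we regularize $f$ or $f_\theta$. Consequently Theorem~\ref{thm:model-aa} applies verbatim to a solution trajectory $x(\cdot)$ of $\mbox{(TRIGS)}_\theta$, and it yields directly
\begin{equation*}
f_\theta(x(t)) - \min_{\cH} f = \mathcal O\!\left(\frac{1}{t^p}\right), \qquad \|x(t) - x_{\varepsilon(t)}\|^2 = \mathcal O\!\left(\frac{1}{t^{\frac{2-p}{2}}}\right),
\end{equation*}
where $x_{\varepsilon(t)} = \argmin_{\cH}\{f_\theta(\cdot) + \tfrac{\varepsilon(t)}{2}\|\cdot\|^2\}$ and this curve still converges strongly to $x^*$. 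This gives the third estimate immediately and, via \eqref{2a}, $\|x_{\varepsilon(t)}\| \le \|x^*\|$, so the trajectory stays bounded.

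Next I would translate the value estimate on $f_\theta$ into the two stated estimates involving $\prox_{\theta f}$. Using formula~(1) for the Moreau envelope, $f_\theta(x(t)) = f(\prox_{\theta f}(x(t))) + \tfrac{1}{2\theta}\|x(t) - \prox_{\theta f}(x(t))\|^2$, and since both summands on the right are nonnegative relative to the shift by $\min_{\cH} f$ (because $f(\prox_{\theta f}(x(t))) \ge \min_{\cH} f$), we get \emph{simultaneously}
\begin{equation*}
0 \le f(\prox_{\theta f}(x(t))) - \min_{\cH} f \le f_\theta(x(t)) - \min_{\cH} f, \qquad \frac{1}{2\theta}\|x(t) - \prox_{\theta f}(x(t))\|^2 \le f_\theta(x(t)) - \min_{\cH} f.
\end{equation*}
The first chain gives the $\mathcal O(1/t^p)$ rate for $f(\prox_{\theta f}(x(t))) - \min_{\cH} f$; the second, after multiplying by $2\theta$, gives $\|x(t) - \prox_{\theta f}(x(t))\|^2 = \mathcal O(1/t^p)$. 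Here the decomposition of the Moreau envelope into a ``function value'' part and a ``distance to prox'' part, each controlled separately by the single quantity $f_\theta(x(t)) - \min_{\cH} f$, is what makes both estimates fall out at once.

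The remaining point, and the only place requiring a small argument beyond bookkeeping, is to make sure the constants and the admissibility of $(\mathcal H_1)$ do not secretly depend on $\theta$ in a way that breaks the argument: they do not, since $(\mathcal H_1)$ is a condition on $\varepsilon(\cdot)$, $\delta$, $\lambda$, $a$, $c$ alone and $\varepsilon(t) = 1/t^p$ with $0 < p < 2$ satisfies it for $t_1$ large exactly as in the proof of Theorem~\ref{thm:model-aa}; the role of $f_\theta$ there is only through convexity and the properties \eqref{2a}--\eqref{2b} of its Tikhonov regularization, both of which hold. I would also note, as in the earlier remark, that strong convergence of $x(t)$ to $x^*$ follows: $\|x(t) - x^*\| \le \|x(t) - x_{\varepsilon(t)}\| + \|x_{\varepsilon(t)} - x^*\|$, where the first term is $\mathcal O(t^{-(2-p)/4}) \to 0$ and the second tends to $0$ by \eqref{2b}. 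The main obstacle, such as it is, is purely expository — stating cleanly that $\prox_{\theta f}(x(t)) \to x^*$ as well (which follows from $\|x(t) - \prox_{\theta f}(x(t))\| \to 0$ together with $x(t) \to x^*$), so that one genuinely recovers a minimizing and norm-minimizing sequence for the original nonsmooth $f$.
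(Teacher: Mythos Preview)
Your proposal is correct and follows essentially the same approach as the paper: apply Theorem~\ref{thm:model-aa} to $f_\theta$ (which satisfies $(\mathcal H_0)$ and has the same minimizers and infimum as $f$), then split the resulting estimate $f_\theta(x(t)) - \min_\cH f = \mathcal O(1/t^p)$ via the identity $f_\theta(x) - \min_\cH f = \big(f(\prox_{\theta f}(x)) - \min_\cH f\big) + \tfrac{1}{2\theta}\|x - \prox_{\theta f}(x)\|^2$ to obtain the first two claims. Your write-up is in fact more explicit than the paper's (which leaves the $\|x(t) - \prox_{\theta f}(x(t))\|^2$ bound implicit), and your additional remarks on $(\mathcal H_1)$ and on the strong convergence of $x(t)$ and $\prox_{\theta f}(x(t))$ to $x^*$ are correct supplementary observations.
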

\begin{proof}
By applying Theorem \ref{thm:model-aa} to the function $f_{\theta} $, and since $\inf f_{\theta} =\inf f$, we get
\begin{eqnarray}
&& 
f_{\theta}(x(t))-\min_{\cH} f= \mathcal O \left( \displaystyle{ \frac{1}{t^{p} } }  \right) \mbox{ as } \; t \to +\infty\\
&& \|x(t) -x_{\varepsilon(t)}\|^2=\mathcal{O}\left(\dfrac{1}{ t^{\frac{2-p}2}}\right) \mbox{ as } \; t \to +\infty.
\end{eqnarray}
According to $f_{\theta} (x)-\min_{\cH} f =\Big( f(\prox_{ \theta f}(x))-\min_{\cH} f \Big) + \frac{1}{2\theta} \|x-\prox_{\theta f}(x)\| ^2$, we get
$$
f({\prox}_{\theta f}(x(t)))-\min_{\cH} f \leq f_{\theta}(x(t))-\min_{\cH} f= \mathcal O \left( \displaystyle{ \frac{1}{t^{p} }}   \right),
$$
which gives the claims.
\end{proof}
\begin{remark}

The above result suggests that, in the case of a nonsmooth convex function  $f: \cH \to \R \cup \left\lbrace +\infty \right\rbrace$, the corresponding proximal algorithms will inherit the  convergence properties of the continuous dynamic (TRIALS).
When considering  convex minimization problems with additive structure
$\min\{ f+g \}$ with $f$ smooth and $g$ nonsmooth, it is  in general difficult to compute the proximal mapping of $f+g$.
A common device  then consists of using a splitting method, and writing the minimization problem as the fixed point problem
$Tx=x$ where, given $\theta >0$
$$
Tx = \mbox{prox}_{\theta g} \left( x- \theta \nabla f(x)\right).
$$
Under appropriate conditions, $T$ is an averaged nonexpansive operator \cite{BC}, so
the associated iterative method (proximal gradient method) converges to a fixed point of $T$, and therefore of the initial minimization problem.
In our context, this  naturally leads to studying the inertial system
\begin{equation*}
\ddot{x}(t) + \d\sqrt{\e(t)}  \dot{x}(t) + (I-T) (x(t)) + \varepsilon (t) x(t) =0.
\end{equation*}
Many  properties of the Tikhonov approximation are still valid for maximally monotone operators, which allows to expect good convergence properties for the above system. This is a subject for further research.
\end{remark}

\section{Numerical illustration}\label{num}
Let us illustrate our results in the following elementary situation. Take
$\cH = \R^{20}$ equipped with the classical Euclidean structure, 
and $f :\R^{20} \to \R^+$ is given by 
$$ f(x_1,\cdots,x_{20}) := \dfrac12\displaystyle\sum_{i=1}^{10}\left(x_{2i-1}+x_{2i}-1\right)^2 .$$
The function $f$ is  convex, but not strongly convex. 
In this case, the solution set 
$S$ is the entire affine subspace $\{x\in\R^{20} : x_{2i-1}+x_{2i}-1=0, \text{ for all }i=1,\cdots,10\}$, and the projection of the origin onto $S$ is given by $x^* = (\demi ,\cdots , \demi)$.
\begin{figure}
  \includegraphics[width=1.05\textwidth]{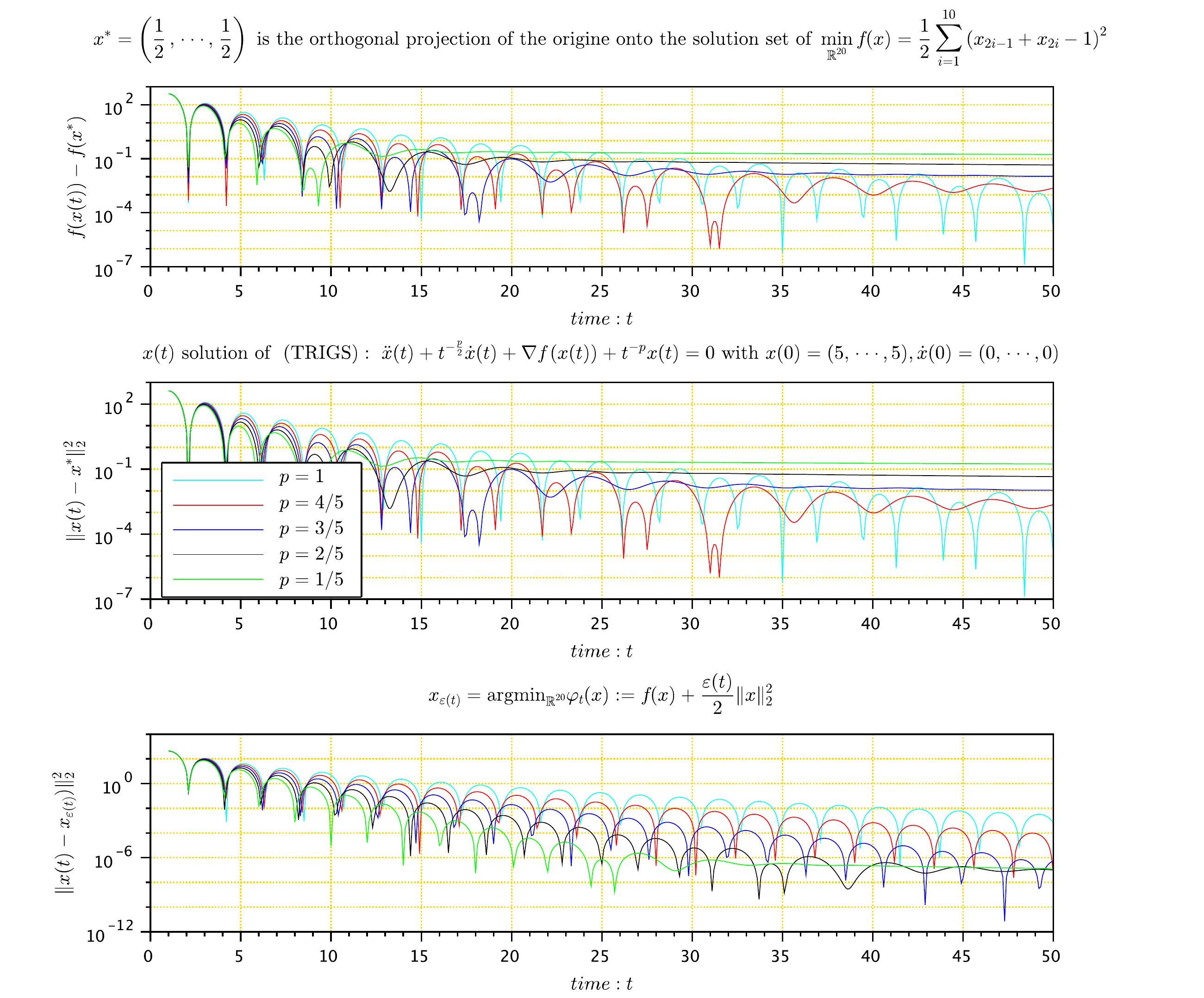}
\caption{ (TRIGS) with $f(x) := \frac12\sum_{i=1}^{10}\left(x_{2i-1}+x_{2i}-1\right)^2$. Convergence rates of
values $f(x(t))-f(x^*)$ (top), of $\|x(t)-x^*\|_2$ (middle), of $\|x(t)-x_{\varepsilon (t)}\|_2$ (below)} 
\label{fig:trials}
\end{figure}
The numerical experiments described in Figure \ref{fig:trials} are in  agreement with our
theoretical findings. 
They show the trade-off between the convergence rate of values $f(x(t))-f(x^*)$  and  trajectories $\|x(t)-x_{\varepsilon (t)})\|_2$, and that taken around $p=\frac{2}{3}$ is a good compromise. 
We limit our illustration to the case $0<p<1$. It is the most interesting case for obtaining  good convergence rate of the trajectories towards the minimum norm solution.  We also notice that  the regularization Tikhonov's term $\frac{1}{t^p} x(t)$ in the system {\rm(TRIGS)} reduces the oscillations.
This  suggests  introducing  the Hessian driven damping into these dynamics to further  dampen oscillations, see \cite{ACFR}, \cite{APR}, \cite{BCL} and references therein.
This  is related to the notion of strong damping for PDE's.

\bigskip

\section{Conclusion, perspective}\label{sec:Conclusion}
In the general framework of convex optimization in Hilbert spaces, we have introduced a damped inertial dynamic which generates trajectories  rapidly converging towards the minimum norm solution. We obtained these results by removing restrictive assumptions concerning the convergence of trajectories, made in previous works. 
This seems to be the first time that these two properties have been obtained for the same inertial dynamic.
We have developed an in-depth mathematical Lyapunov analysis of the dynamic which is a valuable tool for the development of corresponding results for algorithms obtained by temporal discretization. Precisely, the corresponding algorithmic study must be the subject of a subsequent study. 
Many interesting questions such as the introduction of Hessian-driven damping to attenuate oscillations, and the study of the impact of error disturbances, merit further study. 
These results also adapt well   to  inverse problems for which strong convergence of trajectories, and obtaining a solution close to a desired state are key properties. 
It is likely that a parallel approach can be developed for  constrained optimization, in multiobjective optimization for the dynamical approach to Pareto optima, and within the framework of potential games. The Lyapunov analysis developed in this paper could  also be very  useful to study the asymptotic stabilization  of several classes of PDE's, for example nonlinear damped wave equations.

\appendix
\section{Auxiliary results}

\subsection{Existence and uniqueness for the Cauchy problem, energy estimates}
Let us  first  show that the Cauchy problem for (TRIGS) is well posed.
The proof  relies on classical arguments combining   the Cauchy-Lipschitz theorem with energy estimates. The following proof has been given in \cite{AL}. We reproduce it for the convenience of the reader, and give supplementary energy estimates.

\begin{theorem}\label{Cauchy-weel-posed}
Let us make the assumptions $( \mathcal{H}_0) $ on $f$ and $\varepsilon$.
Then, given $(x_0, v_0) \in \cH \times \cH$, there exists a unique global classical solution $x : [t_0, +\infty[ \to \mathcal{H}$ of the Cauchy problem
\begin{align}\label{DynSys-1}
\begin{cases}
\ddot{x}(t) + \d\sqrt{\e(t)} \dot{x}(t) + \nabla f\left(x(t) \right)+\epsilon (t) x(t)=0 \vspace{1mm}\\
x(t_0) = x_0, \,
\dot{x}(t_0) = v_0.
\end{cases}
\end{align}
In addition, the global energy function $t \mapsto W(t)$ is decreasing where
$$
W(t):= \demi \| \dot{x}(t)\|^2 + f(x(t)) + \demi \epsilon (t) \|x(t)\|^2 ,
$$
and we have the energy estimate
\begin{equation}\label{energy1}
\int_{t_0}^{+\infty} \sqrt{\e(t)} \| \dot{x}(t)\|^2 dt <+\infty.
\end{equation}
\end{theorem}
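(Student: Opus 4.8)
The plan is to establish the theorem via the classical two-stage argument: first produce a local solution by Cauchy–Lipschitz, then promote it to a global one by deriving a priori bounds from the energy functional $W$, and finally extract the stated integral estimate from the monotonicity of $W$. First I would rewrite \eqref{DynSys-1} as a first-order system in the phase space $\cH \times \cH$ by setting $X(t) = (x(t),\dot x(t))$, so that $\dot X(t) = F(t,X(t))$ with $F(t,(u,v)) = \bigl(v,\, -\delta\sqrt{\e(t)}\,v - \nabla f(u) - \e(t)\,u\bigr)$. Since $\e$ is $\mathcal C^1$ and $\nabla f$ is Lipschitz on bounded sets (by $(\mathcal H_0)$), $F$ is continuous in $t$ and locally Lipschitz in $X$ uniformly on compact $t$-intervals; the Cauchy–Lipschitz (Picard–Lindelöf) theorem then gives a unique maximal classical solution on some interval $[t_0, T_{\max})$.

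Next I would show $T_{\max} = +\infty$ by ruling out blow-up in finite time. Differentiating $W(t) = \demi\|\dot x(t)\|^2 + f(x(t)) + \demi \e(t)\|x(t)\|^2$ along the trajectory and using the constitutive equation to substitute $\ddot x(t) = -\delta\sqrt{\e(t)}\dot x(t) - \nabla f(x(t)) - \e(t) x(t)$, the cross terms $\langle \nabla f(x(t)) + \e(t) x(t), \dot x(t)\rangle$ cancel, leaving
$$
\dot W(t) = -\delta\sqrt{\e(t)}\,\|\dot x(t)\|^2 + \demi \dot\e(t)\,\|x(t)\|^2 \le 0,
$$
because $\delta > 0$, $\e(t) \ge 0$ and $\dot\e(t) \le 0$ (as $\e$ is nonincreasing). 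Hence $W$ is decreasing, so $W(t) \le W(t_0)$ on $[t_0, T_{\max})$. Since $f \ge \min_\cH f$ is bounded below and $\e(t) \ge 0$, this bounds $\demi\|\dot x(t)\|^2 \le W(t_0) - \min_\cH f$, so $\dot x$ stays bounded; integrating then bounds $x(t)$ on any finite interval. A bounded trajectory cannot escape in finite time (the standard continuation criterion), so $T_{\max} = +\infty$ and the solution is global and classical.

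Finally, for the energy estimate I would integrate the identity for $\dot W$ over $[t_0, t]$:
$$
W(t) - W(t_0) = -\delta\int_{t_0}^{t}\sqrt{\e(s)}\,\|\dot x(s)\|^2\,ds + \demi\int_{t_0}^{t}\dot\e(s)\,\|x(s)\|^2\,ds.
$$
Both terms on the right are $\le 0$, hence $\delta\int_{t_0}^{t}\sqrt{\e(s)}\|\dot x(s)\|^2 ds \le W(t_0) - W(t) \le W(t_0) - \min_\cH f$, uniformly in $t$; letting $t\to+\infty$ gives \eqref{energy1}. The main obstacle — and it is a mild one — is the global existence step: one must be careful that the a priori bound from $W$ controls $\|x(t)\|$ on finite intervals (which it does, via $\dot x$ bounded and integration, even though $W$ itself does not directly bound $\|x(t)\|$ when $\e(t)$ could vanish), and that $\nabla f$ being Lipschitz only on bounded sets suffices once the trajectory is known to stay in a bounded set over each compact time interval. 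Everything else is a routine differentiation and sign-chasing computation.
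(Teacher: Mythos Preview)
Your proof is correct and follows essentially the same route as the paper: rewrite (TRIGS) as a first-order system, invoke Cauchy--Lipschitz for a local solution, differentiate $W$ to get $\dot W(t) = -\delta\sqrt{\e(t)}\|\dot x(t)\|^2 + \tfrac12\dot\e(t)\|x(t)\|^2 \le 0$, use the resulting bound on $\dot x$ (and hence on $x$ over finite intervals) to rule out finite-time blow-up, and integrate the energy identity to obtain \eqref{energy1}. The only cosmetic difference is that the paper phrases the continuation step by showing $\lim_{t\to T}(x(t),\dot x(t))$ exists and re-applying local existence, whereas you invoke the blow-up criterion directly; both are standard.
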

\begin{proof} 
Consider the Hamiltonian formulation of  (\ref{DynSys-1}), which gives the first order system
\begin{align}\label{DynSys-2}
\begin{cases}
\dot{x}(t) - y(t) =0 \vspace{1mm} \\
\dot{y}(t) + \d\sqrt{\e(t)} y(t) + \nabla f\left(x(t) \right)+\epsilon (t) x(t)=0 \vspace{1mm}\\
x(t_0) = x_0, \,
y(t_0) = v_0.
\end{cases}
\end{align}
According to the hypothesis $( \mathcal{H}_0) $, and by applying the Cauchy-Lipschitz theorem in the locally Lipschitz case, we obtain the existence and uniqueness of a local solution of the Cauchy problem \eqref{DynSys-2}.
Then, in order to pass from a local solution to a global solution, we use  energy estimates.  By taking the scalar product of (TRIGS) with $\dot{x}(t)$, we obtain
\begin{equation}\label{energy2}
\frac{d}{dt} \Big( \demi \| \dot{x}(t)\|^2 + f(x(t)) + \demi \epsilon (t) \|x(t)\|^2 )  \Big)
+ \d\sqrt{\e(t)} \| \dot{x}(t)\|^2 - \demi \dot{\epsilon} (t) \|x(t)\|^2 =0.
\end{equation}
According to $(H_3)$, the function $\epsilon(\cdot)$ is non-increasing. Therefore, the energy function $t \mapsto W(t)$ is decreasing where
$
W(t):= \demi \| \dot{x}(t)\|^2 + f(x(t)) + \demi \epsilon (t) \|x(t)\|^2 .
$
The end of the proof follows a standard argument. Take a maximal solution defined on an interval $[t_0, T[$. If $T$ is infinite, the
proof is over. Otherwise, if $T$ is finite, according to the above energy estimate, we have that $\| \dot{x}(t)\|$ remains bounded, just like $\| x(t)\|$ and $\| \ddot{x}(t)\|$ (use (TRIGS)). Therefore, the limit of $x(t)$ and  $\dot{x}(t)$ exists when $t \to T$. Applying the local existence result  at $T$ with the  initial conditions thus obtained gives a contradiction to the maximality of the solution.\\
Let us complete the proof with the energy estimates. Returning to \eqref{energy2}, we get
\begin{equation}\label{energy3}
\frac{d}{dt} \Big( \demi \| \dot{x}(t)\|^2 + f(x(t)) + \demi \epsilon (t) \|x(t)\|^2 )  \Big)
+ \d\sqrt{\e(t)} \| \dot{x}(t)\|^2  \leq 0.
\end{equation}
After integration of \eqref{energy3}, we get \eqref{energy1}.
\end{proof}

\if
{
Et enfin, il faut peut être faire la remarque que l'hypothèse $H_1$ implique la condition $(CD)_\lambda$.
}
\fi


\begin{thebibliography}{10}


 \bibitem{AA} {\sc F. Alvarez, H. Attouch}, Convergence and asymptotic stabilization for some damped hyperbolic equations with non-isolated equilibria,
ESAIM Control Optim. Calc. Var.  6 (2001),  539--552.



\bibitem{AlvCab} {\sc F. Alvarez, A. Cabot},  Asymptotic selection of viscosity
equilibria of semilinear evolution equations by the introduction of a slowly vanishing term,  Discrete Contin. Dyn. Syst. 15 (2006),  921--938.


\bibitem{AAD1}{\sc V. Apidopoulos, J.-F. Aujol,  Ch. Dossal},
 The differential inclusion modeling the FISTA algorithm and optimality of convergence rate in the case $b \leq 3$, SIAM J. Optim.,  28(1)  (2018),  551---574.

\bibitem{Att-book} {\sc H. Attouch},  Variational convergence for functions and operators, Applicable Mathematics
   Series, Pitman Advanced Publishing Program, 1984.

\bibitem{Att2} {\sc H. Attouch},  Viscosity solutions of minimization problems,
SIAM J. Optim. 6 (3) (1996), 769--806.



\bibitem{ABotCest}  {\sc H. Attouch, R.I. Bo\c t,  E.R. Csetnek},
Fast optimization via  inertial dynamics  with closed-loop damping,
Journal of the European Mathematical Society (JEMS), 2021, hal-02910307.

\bibitem{abc} {\sc H. Attouch, L.M. Brice\~no-Arias, P.L. Combettes},   A parallel splitting method for coupled monotone inclusions,
 SIAM J. Control Optim. 48 (5) (2010), 3246--3270.


\bibitem{abc2} {\sc H. Attouch, L.M. Brice\~no-Arias, P.L. Combettes},   A strongly convergent primal-dual method for nonoverlapping domain decomposition,
 Numerische Mathematik, 133(3)  (2016),  443--470.



\bibitem{AC10} {\sc H. Attouch, A.  Cabot},  Asymptotic stabilization of inertial gradient dynamics with time-dependent viscosity,   J. Differential Equations, 263 (9), (2017), 5412--5458.



\bibitem{ACFR} {\sc  H. Attouch, Z. Chbani, J. Fadili, H. Riahi}, First order optimization algorithms via inertial  systems with Hessian driven damping,  Math. Program. (2020),  https://doi.org/10.1007/s10107-020-01591-1.


\bibitem{ACPR}
{\sc H. Attouch, Z. Chbani, J. Peypouquet, P. Redont}, Fast convergence of inertial dynamics and algorithms with asymptotic vanishing viscosity, Mathematical Programming, 168 (1-2) (2018),  123--175.


\bibitem{ACR} {\sc H. Attouch,  Z. Chbani, H. Riahi},
 Combining fast inertial dynamics for convex optimization with Tikhonov regularization,
J. Math. Anal. Appl, 457 (2018),  1065--1094.


\bibitem{AttCom} {\sc H. Attouch,  R. Cominetti},  A dynamical approach to convex
minimization coupling approximation with the steepest descent method,   J.
Differential Equations, 128 (2) (1996), 519--540.



\bibitem{AttCza1}{\sc H. Attouch, M.-O. Czarnecki},  Asymptotic control and stabilization
of nonlinear oscillators with non-isolated equilibria, J. Differential Equations 179 (2002), 278--310.


\bibitem{AttCza2} {\sc H. Attouch, M.-O. Czarnecki},  Asymptotic behavior of coupled dynamical systems with multiscale aspects, J. Differential Equations 248 (2010), 1315--1344.


\bibitem{AttCzaPey1} {\sc H. Attouch, M.-O. Czarnecki, J. Peypouquet},  Prox-penalization and splitting methods for constrained variational problems, SIAM J. Optim. 21 (2011), 149--173.


\bibitem{AttCzaPey2} {\sc H. Attouch, M.-O. Czarnecki, J. Peypouquet},   Coupling forward-backward with penalty schemes and parallel splitting for constrained variational inequalities,  SIAM J. Optim. 21 (2011), 1251--1274.


\bibitem{Att-Czar-last} {\sc H. Attouch, M.-O. Czarnecki},  Asymptotic behavior of gradient-like dynamical systems involving inertia and multiscale aspects, J. Differential Equations,  262 (3)  (2017),  2745--2770.

\bibitem{AL} {\sc H. Attouch, S. L\'aszl\'o},  Convex optimization via inertial algorithms with vanishing Tikhonov  regularization: fast convergence to the minimum norm solution,
arXiv:2104.11987v1 [math.OC] 24 Apr 2021.


\bibitem{AP} {\sc H. Attouch,  J. Peypouquet},  The rate of convergence of Nesterov's accelerated  forward-backward method is actually faster than $1/k^2$, SIAM J. Optim., 26(3) (2016),  pp. 1824--1834.


\bibitem{APR} {\sc H. Attouch, J. Peypouquet, P. Redont},   Fast convex minimization via inertial dynamics with Hessian driven damping,   J. Differential Equations, 261(10),  (2016),  5734--5783.


\bibitem{BaiCom} {\sc J.-B. Baillon, R. Cominetti},  A convergence result for non-autonomous subgradient evolution equations and its application to the steepest descent exponential penalty trajectory in linear programming,  J. Funct. Anal. 187 (2001) 263-273.



\bibitem{BC}{\sc H. Bauschke, P. L. Combettes},   Convex Analysis and Monotone Operator Theory in Hilbert spaces, CMS Books in Mathematics, Springer,   (2011).






\bibitem{BotCse1} {\sc R. I. Bot, E. R. Csetnek},  Forward-Backward and Tseng's type penalty schemes for monotone inclusion problems,  Set-Valued Var. Anal. 22 (2014), 313--331.

\bibitem{BCL}
{\sc R. I. Bo\c t, E. R. Csetnek, S.C. L\'aszl\'o},     Tikhonov regularization of a second order dynamical system with Hessian damping, Math. Program. (2020), https://doi.org/10.1007/s10107-020-01528-8.

\bibitem{Bre1}{\sc H. Br\'ezis},   Op\'erateurs maximaux monotones dans les espaces de Hilbert et \'equations d'\'evolution, Lecture Notes 5, North Holland, (1972).

 \bibitem{Cabot-inertiel}{\sc A. Cabot},  Inertial gradient-like dynamical system controlled by a stabilizing term, J. Optim. Theory Appl. 120 (2004) 275--303.

\bibitem{Cab} {\sc A. Cabot},  Proximal point algorithm controlled by a slowly vanishing term: Applications to hierarchical minimization, SIAM J. Optim.  15 (2) (2005), 555--572.


 \bibitem{CEG}{\sc  A. Cabot, H. Engler, S. Gadat},  On the long time behavior of second order differential equations
with asymptotically small dissipation,
Trans. Amer. Math. Soc.  361 (2009), 5983--6017.



\bibitem{CD}{\sc  A. Chambolle, Ch. Dossal},  On the convergence of the iterates of Fista,
J. Opt. Theory Appl., 166 (2015), 968--982.




\bibitem{Com} {\sc R. Cominetti},   Coupling the proximal point algorithm with approximation methods, J. Optim. Theory Appl. 95 (3) (1997), 581--600.



\bibitem{CPS} {\sc R. Cominetti, J. Peypouquet, S. Sorin},  Strong asymptotic convergence of evolution equations governed by maximal monotone operators with Tikhonov regularization,  J. Differential Equations, 245 (2008),  3753--3763.



\bibitem{FM} {\sc  A. Fiacco,   G. McCormick},  Nonlinear programming: Sequential
Unconstrained Minimization Techniques, John Wiley and Sons, New York, (1968).


\bibitem{HJ2} {\sc A. Haraux, M.A. Jendoubi},
 A Liapunov function approach to the stabilization of second-order coupled systems, (2016) arXiv preprint arXiv:1604.06547.



\bibitem{Hirstoaga}{\sc S.A. Hirstoaga},
 Approximation et r\'esolution de probl\`emes d'\'equilibre, de point fixe et
d'inclusion monotone.  PhD thesis, Universit\'e Pierre et Marie Curie - Paris VI, 2006, HAL Id: tel-00137228.


\bibitem{JM-Tikh}{\sc M.A. Jendoubi, R. May},  On an asymptotically autonomous system with Tikhonov type
regularizing term, Archiv der Mathematik 95 (4) (2010),  389--399.




\bibitem{Nest1}{\sc  Y. Nesterov},   A method of solving a convex programming problem with convergence rate
$O(1/k^2)$, Soviet Math. Dokl.  27 (1983), 372--376.



\bibitem{Nest2}{\sc  Y. Nesterov},  Introductory lectures on convex optimization: A basic course, volume 87 of
Applied Optimization. Kluwer Academic Publishers, Boston, MA, 2004.








\bibitem{Polyak}{\sc  B. Polyak},     Introduction to Optimization, New York, NY: Optimization Software-Inc,  1987.


\bibitem{Siegel} {\sc W. Siegel},  Accelerated first-order methods: Differential equations and Lyapunov functions, arXiv:1903.05671v1 [math.OC], 2019.



\bibitem{SBC}{\sc W.  Su,  S. Boyd,  E. J. Cand\`es}, 
A Differential Equation for Modeling Nesterov's
Accelerated Gradient Method: Theory and Insights.
NIPS, December 2014.



\bibitem{Tikh}{\sc A. N. Tikhonov}, Doklady Akademii Nauk SSSR 151 (1963) 501--504, (Translated in "Solution of incorrectly formulated problems and
the regularization method". Soviet Mathematics 4 (1963) 1035--1038).


\bibitem{TA}{\sc A. N. Tikhonov,  V. Y. Arsenin},  Solutions of Ill-Posed Problems,
Winston, New York, 1977.



\bibitem{Torralba}{\sc D. Torralba},  Convergence epigraphique et changements d'\'echelle en analyse variationnelle et optimisation,
PhD thesis, Universit\'e Montpellier, 1996.


\bibitem{WRJ} {\sc A. C. Wilson, B. Recht, M. I. Jordan}, 
A Lyapunov analysis of momentum methods in
optimization, arXiv preprint arXiv:1611.02635, 2016.


\end{thebibliography}
\end{document}